\documentclass[a4paper,11pt]{amsproc}
\usepackage{amsmath, amssymb, amsthm} 

\usepackage[colorlinks, backref]{hyperref}
\usepackage{amsrefs}
\usepackage{todonotes, nicefrac}
\usepackage{graphicx} 
\usepackage{tikz,tikz-cd}

\theoremstyle{plain}
\newtheorem{theorem}{Theorem}[section]
\newtheorem{question}[theorem]{Question}
\newtheorem{lemma}[theorem]{Lemma}
\newtheorem{corollary}[theorem]{Corollary}
\newtheorem{proposition}[theorem]{Proposition}

\theoremstyle{definition}
\newtheorem{definition}[theorem]{Definition}

\theoremstyle{remark}
\newtheorem{remark}[theorem]{Remark}

\DeclareMathOperator{\Sym}{Sym}
\newcommand{\cP}{{\mathcal P}}
\newcommand{\bbN}{{\mathbb N}}

\DeclareMathOperator{\ad}{ad}
\DeclareMathOperator{\Aut}{Aut}
\DeclareMathOperator{\Fin}{Fin}
\newcommand{\cU}{\mathcal U}

\DeclareMathOperator{\Th}{Th}
\DeclareMathOperator{\Cli}{Cl_{2}}
\newcommand{\CH}{{\mathsf {CH}}}
\newcommand{\ZFC}{{\mathsf {ZFC}}}
\newcommand{\OCA}{{\mathsf {OCA}}}
\newcommand{\PFA}{{\mathsf {PFA}}}
\newcommand{\MAsl}{{\mathsf {MA}_{\aleph_1}(\sigma\textrm{-linked})}}
\newcommand{\MA}{\mathsf {MA}_{\aleph_1} }

\title[On automorphism groups of metric reduced products]{On automorphism groups of metric reduced products of symmetric groups}

\author{Ben De Bondt}
\address{Ben De Bondt, Universität Münster\\
	Institut für Mathematische Logik und Grundlagenforschung\\ 
	Einsteinstr.\ 62\\
	48149 Münster,
	Germany}
\email{bdebondt@uni-muenster.de}

\author{Andreas Thom}
\address{Andreas Thom, Fakult\"at f\"ur Mathematik, TU Dresden, 01062 Dresden} 
\email{andreas.thom@tu-dresden.de}

\linespread{1.2}
\hyphenation{homeo-morphism}

\begin{document}
	
	\begin{abstract}
		We study isomorphisms between metric reduced products of symmetric groups with the normalized Hamming metric assuming the open coloring axiom $\OCA$ and Martin's axiom for $\sigma$-linked posets.
	\end{abstract}
	
	\maketitle
	
	\section{Introduction}
	
	The metric first order theory in the sense of continuous logic \cite{benyaacov} of symmetric groups ${\rm Sym}(n)$ with the normalized Hamming metric
	$$d_n(\sigma,\tau)= \frac{1}{n} |\{1 \leq i \leq n 
	\mid \sigma(i) \neq \tau(i)\}|$$
	is (even if not always under this name) of considerable importance in various fields of mathematics, notably in the study of approximation properties of infinite groups, i.e., Gromov's notorious question \cite{MR1694588} whether all groups are sofic, in ergodic theory of group actions, combinatorics and graph theory. 
	
	When $(k_n)_n$ is any sequence of natural numbers, we denote by $\Sym[(k_n)_n]$ the metric reduced product of the sequence of metric groups $(\Sym(k_n): n \in \bbN)$, see Section \ref{sec:def} for a definition. We assume throughout that $(k_n)_n$ is a sequence of natural numbers with $\lim_n k_n = \infty,$ unless explicitly stated otherwise. A basic result in the study of sofic groups \cites{MR2460675, MR2178069, MR3966829} says that a countable group is sofic if and only if it embeds into a metric ultraproduct or a metric reduced product of symmetric groups as above. Therefore, those groups are sometimes called \emph{universal sofic groups}. Due to these facts and simply in and of themselves, it is worthwhile to study these objects and the isomorphisms between them.
	It has been shown in \cite{alekseevthom} that the metric theories of the groups ${\rm Sym}(n)$ do not converge as $n$ tends to infinity and in fact, there are infinitely many metric FO-sentences that can be assigned arbitrary values in $\{0,1\}$. In particular, $2^{\aleph_0}$ is a lower bound both for the number of isomorphism classes of metric ultraproducts and the number of isomorphism classes of metric reduced products of such groups (see Corollary \ref{cor.noniso}); a bound which in the latter case clearly is attained. It has long been observed however that behavior of isomorphism between the metric reduced product groups $\Sym[(k_n)_n]$ depends heavily on the set theory beyond ${\mathsf {ZFC}}$. In presence of the continuum hypothesis ${\mathsf {CH}}$, for example, isomorphism simply boils down to elementary equivalence of the metric theories and every group $\Sym[(k_n)_n]$ has as many automorphisms as possible (see Theorem~\ref{Th.CHcase}). 
	
	In this note, we want to clarify the situation under a different natural set of additional axioms, namely Todor\v{c}evi\'c's open coloring axiom $\OCA$ and the fragment of Martin's axiom $\MAsl$.
	Both these axioms appear in set theory within the realm of so called forcing axioms, being consequences of the (stronger) proper forcing axiom $\PFA$. 
	Adding moderately weak forcing axioms as considered in this note produces equiconsistent extensions of $\ZFC$, which already provide a coherent picture of a rich set theoretic universe in which $\CH$ fails. 
	This makes them natural and valuable tools for studying, or proving consistency of, various particular principles that oppose $\CH$. In a main line of applications, these particular principles express rigidity behavior of various massive quotient structures that are typically failing under $\CH$. Examples include quotient Boolean algebras, several other algebraic reduced products of countable first order structures, metric quotient structures including $\mathrm{C^*}$-algebras (notably corona algebras and the Calkin algebra), and in the case at hand, universal sofic groups of the reduced metric product type. See \cite{farah2024coronarigidity} for an overview and detailed discussion on the impact of these axioms on rigidity of such structures.
	Referring to \cite{kunen2011set} for set-theoretic background on, as well as the precise statements of, these two particular principles, we note that $\MA$ (of which $\MAsl$ is a fragment) is equivalent to the statement that the intersection of $\aleph_1$-many open dense subsets of a ccc compact topological Hausdorff space is non-empty, whereas $\OCA$ is a Ramsey-like statement governing the behavior of open colorings of (the complete undirected graph on) separable metric spaces. 
	
	The conjunction of $\OCA$ and $\MAsl$ has recently been shown in \cite{TrivIso} and \cite{metricliftingtheorem}, to imply that 
	so called \emph{coordinate-respecting} properties of maps between reduced products $\prod_n M_n / \Fin$ and $\prod_n N_n / \Fin$ (of either algebraic or metric type) of separable structures $M_n, N_n$ imply that the maps are necessarily induced (possibly after permutation of coordinates) by coordinatewise application of a sequence $(h_n)_n$ of functions $h_n: M_n \to N_n.$ The main result of \cite{metricliftingtheorem} in particular provides the main set theoretic ingredient that we are making use of.
	
	Our main result expresses that assuming the two axioms $\OCA$ and $\MAsl$
	all isomorphisms between groups $\Sym[(k_n)_n]$ and $\Sym[(l_n)_n]$ are trivial, where we need to be careful to define trivial in the correct way. 
	Here, triviality of isomorphisms encompasses inner automorphisms that arise from conjugation in each coordinate as well as isomorphisms that arise from the observation that $\Sym(n)$ and $\Sym(n + 1)$ are almost isometrically isomorphic as metric groups for $n$ large. See Definition~\ref{def.triv} for a precise formulation.
	We also give an explicit description of the outer automorphism group of a metric reduced product of symmetric groups.
	
	\section{The basic setup}
	\subsection{Metric reduced products}
	\label{sec:def}
	Let $(k_n)_n$ be a sequence of natural numbers. We define the metric reduced product of ${\rm Sym}(k_n)$ to be the product $\prod_{n}{\rm Sym}(k_n)$ modulo the normal subgroup
	$N((k_n)_n):= \{ (a_n)_n \mid \limsup_n d_n(a_n,1_n) = 0 \}.$
	We denote this group by ${\rm Sym}[(k_n)_n]$.
	
	In our notation we usually won't distinguish between the element $(a_n)_n \in \prod_n \Sym(k_n)$ and the element $[(a_n)_n]$ of $\Sym[(k_n)_n]$ that it represents. This is an affordable convenience as the context will each time make clear the intended type of the objects under consideration. 
	An \emph{almost permutation} of $\bbN$ is a map $f: \bbN \to \bbN$ which has cofinite range and has the property that there exists a cofinite set $S \subseteq \bbN$ such that $f \upharpoonright S$ is injective. Note that the almost permutations of $\bbN$, considered modulo equality on cofinite sets, form a group under composition.
	
	We denote by $\sigma \oplus \tau \in {\rm Sym}(n+m)$ the disjoint union of two permutations $\sigma \in {\rm Sym}(n), \tau \in {\rm Sym}(m)$.
	
	\subsection{Trivial automorphisms of $\Sym[(k_n)_n]$}
	The inner automorphisms provide the simplest examples of automorphisms of  $\Sym[(k_n)_n]$.
	Slightly less obvious is that, at least for certain sequences $(k_n)_n,$ the group $\Sym[(k_n)_n]$ can also have automorphisms that do move its conjugacy classes. It is natural to attempt to produce such automorphisms by starting from a suitable almost permutation $f$ of $\bbN$, to permute the terms of any element of $\Sym[(k_n)_n]$. Thus, one is tempted to define an automorphism $\psi_f$ that maps $(a_n)_n \in \Sym[(k_n)_n]$ to $(a_{f(n)})_n$.  Of course, for this, one should make sure to make sense of the latter object $(a_{f(n)})_n$ as a well-defined element of $\Sym[(k_n)_n]$. This is not immediate, and in fact will only be possible if $f$ has suitable asymptotic behavior with respect to the sequence $(k_n)_n$. Indeed: for every $n$ the permutation $a_{f(n)} \in \Sym(k_{f(n)})$ could have to be lifted or cut down to an element of $\Sym(k_n)$ and in addition the resulting element, after all such cutting and lifting, of $\Sym[(k_n)_n]$ should not depend on the choice of the representative of the element $(a_n)_n \in \Sym[(k_n)_n]$. To handle this, we consider the following operations.
	
	\begin{definition} Let $l \leq m \leq n$ be natural numbers and $\sigma \in \Sym(m)$ a permutation.
		\begin{itemize}
			\item[(cutting)]     Define $\sigma \downarrow \Sym(l)$ to be the permutation of $\{1, \ldots, l\}$ that maps each $i \in \{1, \ldots, l\}$ to $\sigma^{k_i}(i),$ where $k_i \geq 1$ is minimal such that $\sigma^{k_i}(i) \in \{1, \ldots, l\}$.
			\item[(lifting)] Define $\sigma \uparrow \Sym(n)$ to be the permutation $\sigma \oplus 1_{n-m}$ of $\{1, \ldots, n\}$ which acts on $\{1, \ldots, m\}$ as $\sigma$ does and fixes all elements of $\{m+1, \ldots, n\}.$
			
		\end{itemize}
		
	\end{definition}
	
	\begin{definition}
		For any $m,n \in \bbN$ and permutation $\sigma \in \Sym(m)$, define $\sigma \updownarrow \Sym(n)$ as follows:
		\[\sigma \updownarrow\Sym(n)=
		\begin{cases}\sigma\uparrow\Sym(n)&\text{if } m\leq n,\\
			\sigma\downarrow \Sym(n)&\text{if } n\leq m.
		\end{cases}
		\]
	\end{definition}
	
	\begin{lemma}
		Let $ m \leq n$ be natural numbers. 
		\begin{enumerate}
			\item The map $\Sym(n) \to \Sym(m): \sigma \to \sigma \downarrow \Sym(m)$ is a uniform $\frac{2(n-m)}{m}$-homomorphism,
			\item for all $\tau \in \Sym(n)$ and $\sigma \in \Sym(n)$: $d_n(\sigma \downarrow \Sym(m) \uparrow \Sym(n), \sigma) \leq \frac{2(n-m)}{n}$  and $\tau \uparrow \Sym(n) \downarrow \Sym(m)= \tau.$
		\end{enumerate}
	\end{lemma}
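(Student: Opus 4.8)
The plan is to derive everything from a single combinatorial observation about how orbits cross the ``boundary'' between the block $\{1,\dots,m\}$ and its complement $\{m+1,\dots,n\}$. For any fixed $\pi \in \Sym(n)$, injectivity forces $|\{i \le m : \pi(i) \notin \{1,\dots,m\}\}| \le n-m$, since these indices are mapped injectively into the $(n-m)$-element set $\{m+1,\dots,n\}$; symmetrically, at most $n-m$ indices of $\{1,\dots,m\}$ receive their $\pi$-image from outside the block. This ``$n-m$ boundary crossings'' bound is the only arithmetic input: each assertion is obtained by isolating the finitely many coordinates on which the relevant maps can disagree and checking they all lie among such crossings.

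I would dispatch part (2) first. For a permutation $\tau \in \Sym(m)$ of the smaller block, the lift $\tau \uparrow \Sym(n) = \tau \oplus 1_{n-m}$ permutes $\{1,\dots,m\}$ exactly as $\tau$ does and fixes every new point, so for each $i \le m$ the first return to $\{1,\dots,m\}$ occurs immediately at $\tau(i)$; hence $\tau \uparrow \Sym(n) \downarrow \Sym(m) = \tau$ on the nose. For the near-inverse in the other order, set $\rho = \sigma \downarrow \Sym(m)$ and compare $\rho \uparrow \Sym(n)$ with $\sigma$ coordinatewise. On the new points $\{m+1,\dots,n\}$ the lift is the identity, contributing at most $n-m$ disagreements. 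On $\{1,\dots,m\}$ the two agree at every $i$ with $\sigma(i) \in \{1,\dots,m\}$, since there the first-return value is simply $\sigma(i)$; disagreements thus occur only at the at most $n-m$ boundary-crossing indices. Summing, at most $2(n-m)$ of the $n$ coordinates disagree, i.e. $d_n(\sigma \downarrow \Sym(m) \uparrow \Sym(n), \sigma) \le \frac{2(n-m)}{n}$.

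Part (1) is the substantive claim. For approximate multiplicativity I would fix $\sigma, \tau \in \Sym(n)$ and compare $(\sigma\tau) \downarrow \Sym(m)$ with $(\sigma \downarrow \Sym(m))(\tau \downarrow \Sym(m))$ at each $i \le m$. The key case analysis: if the short trajectory $i \mapsto \tau(i) \mapsto \sigma(\tau(i))$ stays inside $\{1,\dots,m\}$, then all three first-return times equal $1$ and both expressions evaluate to $\sigma(\tau(i)) = (\sigma\tau)(i)$, so the coordinate agrees. Hence the coordinates where the two sides can differ are contained in $\{i \le m : \tau(i) \notin \{1,\dots,m\}\} \cup \{i \le m : (\sigma\tau)(i) \notin \{1,\dots,m\}\}$, each of size at most $n-m$ by the boundary bound applied to $\tau$ and to $\sigma\tau$. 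Thus at most $2(n-m)$ of the $m$ coordinates disagree, giving $d_m((\sigma\tau)\downarrow \Sym(m), (\sigma\downarrow \Sym(m))(\tau \downarrow \Sym(m))) \le \frac{2(n-m)}{m}$, exactly the asserted defect.

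It remains to record the ``uniform'' (uniform-continuity) half, which I would prove by the same excursion count. For $\sigma, \sigma' \in \Sym(n)$, the cuttings $\sigma \downarrow \Sym(m)$ and $\sigma' \downarrow \Sym(m)$ agree at every $i \le m$ whose entire first-return excursion $\{i, \sigma(i), \dots, \sigma^{k_i-1}(i)\}$ avoids the set $\{j : \sigma(j) \neq \sigma'(j)\}$ (then $\sigma'$ follows the identical trajectory and returns at the same point); since these excursions are pairwise disjoint, each coordinate where $\sigma$ and $\sigma'$ differ spoils at most one value of the cutting, bounding the number of disagreeing coordinates by $n\, d_n(\sigma,\sigma')$. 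Combined with the isometry-up-to-scale of lifting (namely $d_n(\rho \uparrow \Sym(n), \rho' \uparrow \Sym(n)) = \frac{m}{n} d_m(\rho,\rho')$, immediate since lifts agree off $\{1,\dots,m\}$) and part (2), this yields the required control of $d_m(\sigma\downarrow \Sym(m), \sigma'\downarrow \Sym(m))$ in terms of $d_n(\sigma,\sigma')$. The main obstacle here is bookkeeping rather than depth: one must track the first-return excursions carefully enough to certify that the only bad coordinates are genuine boundary crossings, and then match the chosen formalization of ``uniform $\epsilon$-homomorphism'' to the constant $\frac{2(n-m)}{m}$.
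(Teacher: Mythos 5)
The paper states this lemma without proof (it is offered as a routine observation), so there is no argument of the authors' to compare against; your proof is correct and is clearly the intended one: the cutting is the first-return map of $\sigma$ to $\{1,\dots,m\}$, and every estimate follows from the count that at most $n-m$ points of $\{1,\dots,m\}$ cross the boundary, together with disjointness of first-return excursions. Two minor remarks: you silently (and correctly) repair the statement's typo by taking $\tau \in \Sym(m)$ in the identity $\tau \uparrow \Sym(n) \downarrow \Sym(m) = \tau$; and in the terminology of the Becker--Chapman stability theorem that the paper later invokes, \emph{uniform} $\delta$-homomorphism refers to the multiplicativity defect being at most $\delta$ in the worst-case (sup) sense rather than on average, which your boundary-crossing bound $d_m\bigl((\sigma\tau)\downarrow \Sym(m), (\sigma\downarrow \Sym(m))(\tau \downarrow \Sym(m))\bigr) \le \frac{2(n-m)}{m}$ already establishes, so your final paragraph on uniform continuity, while correct, is not needed to match the statement.
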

	
	It is now clear that to every almost permutation $f$ of $\bbN$ which satisfies $\lim_n \frac{k_{f(n)}}{l_n}=1$, one can associate a (well-defined) isomorphism $\psi_f : \Sym[(k_n)_n] \to \Sym[(l_n)_n]$: $\psi_f$ maps $(a_n)_n \in \Sym[(k_n)_n]$ to the element $( a_{f(n)} \updownarrow \Sym(l_n) )_n$ of $\Sym[(l_n)_n].$
	
	We have thus obtained a natural family of isomorphisms between groups $\Sym[(k_n)_n]$ and $\Sym[(l_n)_n]$ that seem to cover all simple instances of constructing such isomorphisms.
	
	\begin{definition}\label{def.triv}
		Call an isomorphism $\varphi:  \Sym[(k_n)_n] \to\Sym[(l_n)_n]$ \emph {trivial} if it is, up to conjugation with an element of $\Sym[(l_n)_n]$, of the form $\psi_f:  \Sym[(k_n)_n] \to\Sym[(l_n)_n]$ for some almost permutation $f$ of $\bbN$ which satisfies $\lim_n \frac{k_{f(n)}}{l_n}=1$.
	\end{definition}
	
	The most basic example that illustrates the previous definition is the map $$\varphi \colon {\rm Sym}[(n)_n] \to {\rm Sym}[(n)_n],$$ which is defined as $\varphi((a_{n})_n) := (a_{n-1} \oplus 1_1)_n.$ This is clearly an isometric automorphism of the metric reduced product and evidently not inner. According to our definition it is trivial. Our main result basically says that, assuming $\OCA$ and $\MAsl$, automorphisms like this one are  the only ones we need to add to the inner automorphisms in order to obtain the full automorphism group.
	
	\section{Proof of the main result}
	
	\subsection{Isomorphisms of product form}
	We first analyze all isomorphisms $\varphi:\Sym[(k_n)_n] \to\Sym[(l_n)_n]$ which are of the following specific form. In this subsection, as well as the next, no set theoretic assumptions beyond those in $\mathsf{ZFC}$ are needed.
	
	\begin{definition}
		A map $\varphi:\Sym[(k_n)_n] \to\Sym[(l_n)_n]$ is said to be \emph{of product form} if there exists a sequence $(h_n)_n$ of maps $h_n : \Sym(k_n) \to \Sym(l_n)$ such that $\varphi$ maps every element $(a_n)_n \in \Sym[(k_n)_n]$ to the element $(h_n(a_n))_n$ of $\Sym[(l_n)_n].$ 
	\end{definition}
	
	Clearly every inner automorphism of $\Sym[(k_n)_n]$ is of product form, we prove in this subsection that conversely every automorphism of $\Sym[(k_n)_n]$ which is of product form is inner. The argument depends on the following Ulam stability result for almost surjective almost homomorphisms between finite symmetric groups, which we derive from \cite{MR4634678}*{Theorem 1.2}.
	
	\begin{theorem} \label{Th.stab}
		There exists a universal constant $c>2$, such that for all $n,m \in \mathbb N$ and all $\varphi \colon {\rm Sym}(n) \to {\rm Sym}(m)$ the following holds for $\delta<1/(3c)$ and $n,m \geq 7$:
		If 
		\begin{enumerate}
			\item $d_m(\varphi(\sigma\tau),\varphi(\sigma)\varphi(\tau)) \leq \delta$ for all $\sigma,\tau \in {\rm Sym}(n)$ and
			\item for all $\sigma \in {\rm Sym}(m)$, there exists $\tau \in {\rm Sym}(n)$, such that $d_m(\varphi(\tau),\sigma)\leq\delta$,
		\end{enumerate}
		then we obtain
		$(1-\delta)m \leq n \leq (1+c \delta)m$ and
		there exists a permutation $\alpha \in {\rm Sym}(n)$, such that
		$$d_{n \vee m}(\varphi(\sigma) \uparrow \Sym(n\vee m), {\rm ad}_{\alpha}(\sigma) \uparrow \Sym(n \vee m) ) \leq 4c \delta, \quad \forall \sigma \in {\rm Sym}(n).$$
		Moreover, the map $\varphi$ is almost isometric, i.e.,
		$$\left|d_n(\sigma,\tau) - d_m(\varphi(\sigma),\varphi(\tau)) \right| \leq 5c\delta, \quad \forall \sigma,\tau \in {\rm Sym}(n).$$
	\end{theorem}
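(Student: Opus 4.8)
The plan is to feed $\varphi$ into the Ulam stability theorem \cite{MR4634678}*{Theorem 1.2} to replace it by a genuine homomorphism $\psi\colon\Sym(n)\to\Sym(m)$ with a uniform bound $d_m(\varphi(\sigma),\psi(\sigma))\le c_0\delta$, and then to use the almost surjectivity hypothesis~(2) to read off the orbit structure of the faithful action defined by $\psi$. The regime $\delta<1/(3c)$ and $n,m\ge 7$ is exactly what is needed to apply the cited result and, as we will see, to exclude the sporadic low-degree actions of symmetric groups.

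First I would show that $\psi$ is injective. Since $n\ge 7$ the group $\Sym(n)$ has only the proper normal subgroups $1$ and $\Alt(n)$, so $\ker\psi\in\{1,\Alt(n),\Sym(n)\}$; in the latter two cases the image of $\psi$ has at most two points, whence $\varphi(\Sym(n))$ lies in the union of two balls of radius $c_0\delta$. By hypothesis~(2) every element of $\Sym(m)$ would then lie within $(1+c_0)\delta$ of two fixed permutations, which is impossible for $m\ge 7$ and $\delta<1/(3c)$ because $\Sym(m)$ contains three permutations that are pairwise at normalized Hamming distance bounded away from $0$. Hence $\ker\psi=1$. As the minimal degree of a faithful permutation representation of $\Sym(n)$ equals $n$ for $n\ge 7$, we obtain $n\le m=n\vee m$, which already yields the upper estimate $n\le(1+c\delta)m$.

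Next I would analyze the faithful action of $\Sym(n)$ on $\{1,\dots,m\}$ defined by $\psi$. A covering estimate disposes of large $m$: a $\delta$-dense subset of $\Sym(m)$ of cardinality $|\Sym(n)|=n!$ forces, by a ball-counting argument, that $m=O(n)$ and in particular $m<\binom n2$ for $n\ge 7$. Decomposing $\{1,\dots,m\}$ into $\Sym(n)$-orbits, every orbit size is the index of a subgroup of $\Sym(n)$, and for $n\ge 7$ the only such indices below $\binom n2$ are $1$, $2$ and $n$; ruling out intermediate indices is precisely where $n\ge 7$ enters, since the exotic low-degree actions of $\Sym(5)$ and $\Sym(6)$ are thereby excluded. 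An orbit of size $2$ has kernel $\Alt(n)$, so if no orbit of size $n$ occurred the action would factor through the sign homomorphism and fail to be faithful; as $m<2n$ there is exactly one orbit of size $n$, on which—using that $\Sym(n-1)$ is the unique conjugacy class of index-$n$ subgroups—$\psi$ realizes the natural action. After conjugating in $\Sym(m)$ we may thus assume that $\psi(\sigma)$ acts on $\{1,\dots,n\}$ as $\ad_\alpha(\sigma)$ for a fixed $\alpha\in\Sym(n)$ and moves only the remaining $r:=m-n$ points, each inside an orbit of size $1$ or $2$.

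Finally I would quantify $r$ and assemble both conclusions. Choosing a target $\rho\in\Sym(m)$ that sends every leftover point outside its (size $\le 2$) orbit forces $d_m(\psi(\sigma),\rho)\ge r/m$ for all $\sigma$; together with $d_m(\varphi(\sigma),\psi(\sigma))\le c_0\delta$ and hypothesis~(2) this bounds the fraction $r/m=(m-n)/m$ linearly in $\delta$, which is the content of the lower estimate $(1-\delta)m\le n$. For the remaining assertions, on the natural orbit one has $d_m(\psi(\sigma),\psi(\tau))=\tfrac nm\,d_n(\ad_\alpha(\sigma),\ad_\alpha(\tau))=\tfrac nm\,d_n(\sigma,\tau)$ because $\ad_\alpha$ is an isometry of $(\Sym(n),d_n)$, while the $r$ leftover points contribute at most $r/m$; hence $|d_m(\psi(\sigma),\psi(\tau))-d_n(\sigma,\tau)|\le 2r/m$, and likewise $d_m(\psi(\sigma),\ad_\alpha(\sigma)\uparrow\Sym(m))\le r/m$. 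Adding the universal slack $d_m(\varphi(\sigma),\psi(\sigma))\le c_0\delta$ turns these into the desired bounds $4c\delta$ and $5c\delta$ for a suitable universal $c>2$. The main obstacle is the structural rigidity step of the third paragraph: converting the soft density hypothesis~(2) into the rigid statement that the faithful action is a single natural orbit together with only $O(\delta m)$ further points, which is where the classification of small-index subgroups of $\Sym(n)$ and the ball-counting estimate must be combined while keeping every constant independent of $n$ and $m$.
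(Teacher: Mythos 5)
Your strategy is essentially the paper's own (Becker--Chapman stability, injectivity via the normal subgroup structure of $\Sym(n)$ together with almost surjectivity, then classification of faithful permutation actions of degree $<2n$), but there is a genuine gap in your very first step, and it propagates. \cite{MR4634678}*{Theorem 1.2} does \emph{not} provide a homomorphism $\psi\colon\Sym(n)\to\Sym(m)$; it provides a homomorphism $\psi\colon\Sym(n)\to\Sym(m')$ into a possibly \emph{larger} symmetric group, with $m\le m'\le(1+c\delta)m$ and $d_{m'}(\varphi(\sigma)\oplus 1_{m'-m},\psi(\sigma))\le c\delta$. The degree increase cannot be dispensed with: take $\varphi\colon\Sym(m+1)\to\Sym(m)$, $\sigma\mapsto\sigma\downarrow\Sym(m)$ (the paper's cutting map). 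It satisfies hypotheses (1) and (2) with $\delta=2/m$, yet every genuine homomorphism $\Sym(m+1)\to\Sym(m)$ factors through the sign map, because $\Sym(m+1)$ has no faithful permutation representation of degree $m$; so no $\psi$ of the kind you posit exists at all. The same example shows that your intermediate conclusion ``$n\le m=n\vee m$'' is false in general ($n=m+1$ there), and your entire orbit analysis --- which takes place on $\{1,\dots,m\}$ and sets $r=m-n\ge 0$ --- silently excludes the case $n>m$. That case is not peripheral: it is exactly the situation produced by the shift-type trivial automorphisms that the theorem is designed to detect. The repair is what the paper does: run the whole argument on $\{1,\dots,m'\}$, deduce $n\le m'\le(1+c\delta)m$ from injectivity, and only at the end cut back down to $n\vee m$.

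Two further quantitative points. Your lower-bound argument via leftover points yields at best $(1-(1+c_0)\delta)m\le n$, not the stated $(1-\delta)m\le n$; the paper gets the exact constant by pure counting from hypothesis (2) alone, before any stability input: $\Sym(m)$ is covered by $n!$ balls of radius $\delta$, each of cardinality at most $(\delta m)!\binom{m}{\delta m}=m!/((1-\delta)m)!$, whence $((1-\delta)m)!\le n!$ and $(1-\delta)m\le n$. Relatedly, your orbit analysis needs $m<2n$ (to force a unique orbit of size $n$), but your covering step only asserts $m=O(n)$ and $m<\binom{n}{2}$; it is the sharp counting bound just stated that actually gives $m\le n/(1-\delta)<2n$ in the regime $\delta<1/(3c)$, $c>2$. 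Both points are repairable with the counting argument you allude to, but as written the claimed inequalities are not established.
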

	\begin{proof}
		By (1) and \cite{MR4634678}*{Theorem 1.2} there exists a universal constant $c>0$, such that there exists a homomorphism $\psi \colon {\rm Sym}(n) \to {\rm Sym}(m')$ with $m' \in [m,(1+c \delta)m] \cap \mathbb N$ and $d_{m'}(\varphi(\sigma) \oplus 1_{m'-m},\psi(\sigma)) \leq c \delta$ for all $\sigma \in {\rm Sym}(n)$.
		
		The size of a $\delta$-neighborhood of the permutation $\varphi(\tau) \in {\rm Sym}(m)$ in the normalized Hamming metric is at most of cardinality
		$(\delta m)! \binom{m}{\delta m}.$ We conclude from the second assumption that
		$$m! \leq n! (\delta m)! \binom{m}{\delta m} = \frac{ n!m!}{((1-\delta)m)!}$$ and hence
		$(1-\delta)m \leq n.$
		
		Now, if $\psi$ factorizes over the group $\mathbb Z/2\mathbb Z$, the image of $\varphi$ is contained in a $c\delta$-neighborhood of at most two points and hence by our second assumption all of ${\rm Sym}(m)$ is contained in the $(c+1)\delta$-neighborhood of those two points. However, this implies
		$$m! \leq 2((c+1)\delta m)! \binom{m}{(c+1)\delta m} \leq  \frac{2m!}{((1-(c+1)\delta)m)!},$$
		which is absurd, since $(1-(c+1)\delta)m \geq 3$ by our choices.
		
		Thus we conclude that $\psi$ is injective. Then, we must have $n \leq m'$ and we obtain that
		$$(1-\delta)m \leq n \leq (1+c \delta)m.$$
		Moreover, the only permutation representations of ${\rm Sym}(n)$ for $n \geq 7$ on a set $\{1,\dots,m'\}$ with $m' \leq (1+c\delta)m \leq \frac{1 +c \delta}{1 - \delta} n < 2n$ are induced by embeddings $\alpha' \colon \{1,\dots,n\} \to \{1,\dots,m'\}$, where ${\rm Sym}(n)$ may act on the complement of the image of $\alpha'$ by an involution.
		Since the image of $\alpha'$ necessarily has a large overlap with the subset $\{1,\dots,n\}$, we may find $\alpha \in {\rm Sym}(n)$ 
		with $$d_{m'}(\psi(\sigma), {\rm ad}_{\alpha}(\sigma) \oplus 1_{m'-n}) \leq 3c\delta,\quad \forall \sigma \in {\rm Sym}(n).$$ Finally, this implies
		$$d_{n \vee m}(\varphi(\sigma) \oplus 1_{n\vee m - m}, {\rm ad}_{\alpha}(\sigma) \oplus 1_{n \vee m-n}) \leq 4c \delta, \quad \forall \sigma \in {\rm Sym}(n)$$
		as required.
	\end{proof}
	
	The following is a straightforward observation.
	
	\begin{lemma} \label{lem.defect}
		Let $((G_n,d_n) : n \in \bbN)$ and $((H_n,\partial_n) : n \in \bbN)$ be two sequences of bi-invariant metric groups of uniformly bounded diameter and $(h_n : n \in \bbN)$ a sequence of maps $h_n:G_n \to H_n$. Suppose that the map $\prod_n G_n \to \prod_n H_n$ which maps $(a_n)_n$ to $(h_n(a_n))_n$ induces a map $\varphi: \prod_n G_n / \Fin \to \prod_n H_n / \Fin .$ Then
		\begin{enumerate}
			\item $\varphi$ is a group homomorphism if and only if
			\[\lim_n \sup_{x,y\in G_n} \partial_n(h_n(x) h_n(y) , h_n(xy)) = 0,\]
			\item $\varphi$ is surjective if and only if
			\[\lim_n \sup_{x\in H_n} \inf_{y\in G_n} \partial_n(x , h_n(y)) =0.\]
		\end{enumerate}
	\end{lemma}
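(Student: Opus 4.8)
The plan is to exploit the fact that the metric on a reduced product $\prod_n G_n / \Fin$ is given by the limit superior of the coordinate metrics. Since the $d_n$ are bi-invariant (and of uniformly bounded diameter, so the limit superior is finite) and we quotient by the sequences that are $d_n$-null in limit superior, the quantity $D([(a_n)_n],[(b_n)_n]) := \limsup_n d_n(a_n,b_n)$ is independent of the chosen representatives: two representatives differ by a limsup-null perturbation, which leaves the limit superior unchanged by the triangle inequality. Moreover $[(a_n)_n] = [(b_n)_n]$ precisely when $D([(a_n)_n],[(b_n)_n]) = 0$, and the analogous statements hold on the $H$-side with $\partial_n$. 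Both equivalences then follow from the elementary principle that, for a bounded doubly-indexed family of quantities, the limit superior over all coordinatewise choices vanishes if and only if the uniform supremum in each coordinate tends to zero; the forward implication of this principle is immediate, while the reverse uses a coordinatewise selection of near-extremal witnesses along an infinite set of indices.

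For part (1), I would first record the multiplicative defect of $\varphi$: for arbitrary $[(a_n)_n], [(b_n)_n]$ one has, using that $\varphi$ is represented coordinatewise by $(h_n)_n$,
\[
D\bigl(\varphi([(a_n)_n][(b_n)_n]),\, \varphi([(a_n)_n])\,\varphi([(b_n)_n])\bigr) = \limsup_n \partial_n\bigl(h_n(a_nb_n),\, h_n(a_n)h_n(b_n)\bigr).
\]
If $\lim_n \sup_{x,y \in G_n} \partial_n(h_n(x)h_n(y), h_n(xy)) = 0$, each term inside the limit superior is dominated by a null sequence, so the defect vanishes for all inputs and $\varphi$ is a homomorphism. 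Conversely, by contraposition, if the supremum does not tend to zero, fix $\varepsilon > 0$ and an infinite set $S \subseteq \bbN$ on which $\sup_{x,y} \partial_n(h_n(x)h_n(y), h_n(xy)) \geq \varepsilon$; for $n \in S$ choose witnesses $x_n, y_n \in G_n$ with $\partial_n(h_n(x_n)h_n(y_n), h_n(x_ny_n)) \geq \varepsilon/2$, and set $x_n = y_n = 1_n$ off $S$. The elements $[(x_n)_n], [(y_n)_n]$ then have multiplicative defect at least $\varepsilon/2 > 0$, so $\varphi$ fails to be a homomorphism.

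Part (2) runs along the same lines with the covering-radius quantity $\varepsilon_n := \sup_{x \in H_n} \inf_{y \in G_n} \partial_n(x, h_n(y))$. For the \emph{if} direction, given any target $[(c_n)_n] \in \prod_n H_n / \Fin$, I would choose for each $n$ some $a_n \in G_n$ with $\partial_n(c_n, h_n(a_n)) \leq \varepsilon_n + 2^{-n}$, where the small tolerance $2^{-n}$ absorbs the possible non-attainment of the infimum when the groups are infinite; then $\limsup_n \partial_n(c_n, h_n(a_n)) = 0$, so $\varphi([(a_n)_n]) = [(c_n)_n]$ and $\varphi$ is surjective. For the \emph{only if} direction, by contraposition, if $\varepsilon_n$ does not tend to zero, fix an infinite $S$ and targets $c_n \in H_n$ for $n \in S$ lying at distance bounded below from the image $h_n(G_n)$, with $c_n = 1_n$ off $S$; the element $[(c_n)_n]$ then sits at positive $D$-distance from the entire image of $\varphi$, contradicting surjectivity.

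The forward implications are routine once the limsup description of the quotient metric is in place. I expect the only point requiring genuine care to be the selection arguments in the two \emph{only if} directions: one must extract the witnessing coordinates along an infinite index set, pad the remaining coordinates with the identity so as to obtain a legitimate element of the reduced product, and check that the limit superior of the resulting defect (respectively, of the distance to the image) stays bounded away from zero. The tolerance $2^{-n}$ in part (2) is the one place where the possible failure of infima to be attained must be handled explicitly.
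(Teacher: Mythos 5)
Your proof is correct. The paper offers no argument for this lemma at all---it is introduced only as ``a straightforward observation''---and your verification (the $\limsup$ description of the quotient pseudometric, the forward implications by domination, and the contrapositive directions via witness selection along an infinite index set padded by the identity, with the $2^{-n}$ tolerance handling non-attained infima) is exactly the routine argument the authors intend the reader to supply.
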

	
	Combining Theorem~\ref{Th.stab} and Lemma~\ref{lem.defect}, we obtain the following structure result for isomorphisms of product form.
	
	\begin{lemma}\label{lem.coordfixcase}
		Let $(k_n)_n$ and $(l_n)_n$ be two sequences of natural numbers with $\lim_n k_n = \lim_n l_n = \infty.$ If $\varphi : \Sym[(k_n)_n] \to \Sym[(l_n)_n]$ is a surjective group morphism of product form, then $\lim \frac{k_n}{l_n}=1$ and there exists $\sigma \in \Sym[(l_n)_n]$ such that $\varphi$ maps every element $(a_n)_n \in \Sym[(k_n)_n]$ to the element $\ad_\sigma((a_n \updownarrow \Sym(l_n))_n)$ of $\Sym[(l_n)_n].$ 
		In particular, $\varphi$ is automatically injective.
	\end{lemma}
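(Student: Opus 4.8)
The plan is to apply Theorem~\ref{Th.stab} coordinatewise and then transport the resulting approximations from $\Sym(k_n \vee l_n)$ down to $\Sym(l_n)$. Write $\varphi$ in product form via maps $h_n \colon \Sym(k_n) \to \Sym(l_n)$, and set
\[\delta_n := \max\Big\{\sup_{x,y \in \Sym(k_n)} d_{l_n}(h_n(x)h_n(y), h_n(xy)),\ \sup_{x \in \Sym(l_n)} \inf_{y \in \Sym(k_n)} d_{l_n}(x, h_n(y))\Big\}.\]
Since $\varphi$ is a surjective homomorphism, Lemma~\ref{lem.defect} gives $\delta_n \to 0$. As $k_n, l_n \to \infty$, for all large $n$ the hypotheses of Theorem~\ref{Th.stab} hold with $\delta = \delta_n$, so I obtain $(1-\delta_n) l_n \le k_n \le (1 + c\delta_n) l_n$ --- whence $\lim_n k_n/l_n = 1$ after dividing by $l_n$ --- together with permutations $\alpha_n \in \Sym(k_n)$ satisfying
\[d_{k_n \vee l_n}\big(h_n(\sigma) \uparrow \Sym(k_n \vee l_n),\ \ad_{\alpha_n}(\sigma) \uparrow \Sym(k_n \vee l_n)\big) \le 4c\delta_n, \quad \forall \sigma \in \Sym(k_n).\]

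The remaining task is to produce $\sigma_n \in \Sym(l_n)$ and a null sequence $\eta_n \to 0$ with $\sup_\sigma d_{l_n}(h_n(\sigma), \ad_{\sigma_n}(\sigma \updownarrow \Sym(l_n))) \le \eta_n$. When $k_n \le l_n$ this is immediate: then $k_n \vee l_n = l_n$, the first slot of the displayed estimate is simply $h_n(\sigma)$, and taking $\sigma_n := \alpha_n \uparrow \Sym(l_n)$ one checks $\ad_{\alpha_n}(\sigma) \uparrow \Sym(l_n) = \ad_{\sigma_n}(\sigma \uparrow \Sym(l_n))$, so $\eta_n = 4c\delta_n$ works.

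The hard part will be the cutting case $l_n < k_n$, where $k_n \vee l_n = k_n$ and the displayed estimate only controls $h_n(\sigma) \uparrow \Sym(k_n) = h_n(\sigma) \oplus 1_{k_n - l_n}$ against $\ad_{\alpha_n}(\sigma)$ inside $\Sym(k_n)$; I must push this down to $\Sym(l_n)$ through the cutting map, which mixes orbits and need not commute with conjugation. I would first record two facts about cutting that follow from the stated Lemma: the support estimate $d_{l_n}(1_{l_n}, \tau \downarrow \Sym(l_n)) \le \tfrac{k_n}{l_n} d_{k_n}(1_{k_n}, \tau)$ (since the first-return map can only move $i \le l_n$ if $\tau$ already moves it), and, combining this with part~(1) of the Lemma and bi-invariance, the Lipschitz-type bound $d_{l_n}(\rho \downarrow \Sym(l_n), \rho' \downarrow \Sym(l_n)) \le \tfrac{k_n}{l_n} d_{k_n}(\rho, \rho') + \tfrac{2(k_n - l_n)}{l_n}$. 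Applying the latter to the displayed inequality and using $h_n(\sigma) \uparrow \Sym(k_n) \downarrow \Sym(l_n) = h_n(\sigma)$ from part~(2) of the Lemma controls $h_n(\sigma)$ against $\ad_{\alpha_n}(\sigma) \downarrow \Sym(l_n)$; finally, the approximate-homomorphism property of cutting, applied twice, shows that $\ad_{\alpha_n}(\sigma) \downarrow \Sym(l_n)$ is within $O(\tfrac{k_n - l_n}{l_n})$ of $\ad_{\sigma_n}(\sigma \downarrow \Sym(l_n))$ for $\sigma_n := \alpha_n \downarrow \Sym(l_n)$. Since $k_n/l_n \to 1$ and $\delta_n \to 0$, all error terms vanish, giving a suitable $\eta_n$.

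With these uniform estimates in hand, the element $\sigma := (\sigma_n)_n$ (defined arbitrarily on the finitely many exceptional $n$) represents a member of $\Sym[(l_n)_n]$, and $\varphi((a_n)_n) = (h_n(a_n))_n = \ad_\sigma((a_n \updownarrow \Sym(l_n))_n)$ because the coordinatewise distance is bounded by $\eta_n \to 0$. Injectivity is then automatic: having already shown $\lim_n k_n/l_n = 1$, the map $\psi_{\mathrm{id}} \colon (a_n)_n \mapsto (a_n \updownarrow \Sym(l_n))_n$ is the isomorphism associated to the identity almost permutation, and $\ad_\sigma$ is an automorphism, so $\varphi = \ad_\sigma \circ \psi_{\mathrm{id}}$ is a composition of injective maps.
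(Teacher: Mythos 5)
Your proof is correct and follows essentially the same route as the paper's: use Lemma~\ref{lem.defect} to get vanishing defects, apply Theorem~\ref{Th.stab} coordinatewise to obtain the ratio limit and conjugators $\alpha_n \in \Sym(k_n)$, and then transport the conjugation identity down to $\Sym(l_n)$ via cutting. The only difference is one of detail: the paper compresses your case analysis and the cutting estimates into the single assertion that $\sigma = (\sigma'_n \downarrow \Sym(l_n))_n$ ``has the required properties,'' which is exactly the step your explicit error bounds justify.
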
	
	
	\begin{proof}
		From Lemma~\ref{lem.defect}, it follows that for every $\delta>0,$ both assumptions of Theorem~\ref{Th.stab} are satisfied by all but finitely many of the maps $h_n.$ It follows that $\lim \frac{k_n}{l_n}=1$. Coordinatewise application of Theorem~\ref{Th.stab} yields an element $\sigma' \in \Sym[(k_n \vee l_n)_n]$ such that whenever $\varphi$ maps $(a_n)_n \in \Sym[(k_n)_n]$ to $(b_n)_n \in \Sym[(l_n)_n]$, we have  $(b_n \uparrow  \Sym(k_n \vee l_n))_n = \ad_{\sigma'}( (a_n\uparrow \Sym(k_n \vee l_n))_n)$. Because  $\lim \frac{k_n}{l_n}=1$, the element $\sigma = (\sigma_n)_n = (\sigma'_n \downarrow \Sym(l_n))_n$ of $\Sym[(l_n)_n]$ then has the required properties.
	\end{proof}
	
	\subsection{Permuting conjugacy classes}	
	
	Following notation from \cite{metricliftingtheorem}, every infinite $S\in \cP(\bbN) / \Fin$ induces a pseudometric $d^S$ on every metric reduced product $\Sym[(k_n)_n],$ defined as follows:
	\[
	d^S(a,b)=\limsup_{n\in S} d_n(a_n,b_n),
	\]
	for all $a=(a_n)_n,b=(b_n)_n \in \Sym[(k_n)_n]$.
	For $a,b \in \Sym[(k_n)_n]$, we write $a=_Sb$ for $d^S(a,b)=0$. Recalling the notion of triviality of isomorphisms from Definition~\ref{def.triv}, we find that every trivial isomorphism $\varphi: \Sym[(k_n)_n] \to \Sym[(l_n)_n]$ respects the pseudometrics $d^S$ in the following sense:
	\[ d^S(a,b) = d^{f^{-1}[S]}(\varphi(a),\varphi(b)),\]
	where $f$ is the corresponding almost permutation of $\bbN.$
	
	It turns out that (see Corollary \ref{cor.isometric}) a very similar condition is satisfied by every  $\varphi: \Sym[(k_n)_n] \to \Sym[(l_n)_n]$, provably so in $\mathsf{ZFC}$. This is remarkable as not every such isomorphism is necessarily trivial in $\mathsf{ZFC}$ (see further).
	
	\begin{theorem}\label{th.action boundary}	
		Every isomorphism $\varphi: \Sym[(k_n)_n] \to \Sym[(l_n)_n]$ induces an automorphism $\theta$ of the Stone-Cech boundary $\partial \beta \bbN,$ which moreover has the following two properties: 
		\begin{enumerate}
			\item\label{itm1} for all $a,b \in \Sym[(k_n)_n]$ and any infinite subset $S \subseteq \bbN,$ we have $a =_S b \Leftrightarrow \varphi(a) =_{\theta(S)} \varphi(b),$
			\item\label{itm2} for every nonprincipal ultrafilter $\cU \in \partial \beta \bbN$, the isomorphism $\varphi$ induces an isomorphism $\varphi_\cU$ between the metric ultraproduct groups $\prod_n  \Sym(k_n) / \cU$ and $\prod_n  \Sym(l_n) / \theta^{-1}(\cU).$
		\end{enumerate}
	\end{theorem}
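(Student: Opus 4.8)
\emph{The plan is to} reconstruct the family of normal subgroups that encode the pseudometrics $d^S$ purely from the group structure, so that the abstract isomorphism $\varphi$ (which we may \emph{not} assume continuous) is forced to preserve it. For $S\subseteq\bbN$ write $I_S=\{a : a=_S 1\}$ and $J_S=\{a : a=_{S^c}1\}$ for the normal subgroups of elements trivial on $S$, resp.\ supported on $S$; note $I_S=J_{S^c}$, that $J_S\cap J_{S^c}=\{1\}$, that $J_S$ and $J_{S^c}$ commute (disjoint supports), and that $J_S\cdot J_{S^c}=\Sym[(k_n)_n]$, so $\Sym[(k_n)_n]=J_S\times J_{S^c}$ is an internal direct product. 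Since $a=_Sb\iff ab^{-1}\in I_S$, property \eqref{itm1} is equivalent to the assertion that $\varphi(I_S)=I_{\theta(S)}$ for a Boolean automorphism $\theta$ of $\cP(\bbN)/\Fin$, i.e.\ a homeomorphism of $\partial\beta\bbN$. Likewise, writing $I_\cU=\{a:\lim_{n\to\cU}d_n(a_n,1)=0\}$ for a nonprincipal ultrafilter $\cU$, one has $\Sym[(k_n)_n]/I_\cU\cong\prod_n\Sym(k_n)/\cU$, so property \eqref{itm2} amounts to $\varphi$ matching the subgroups $I_\cU$ on the two sides.

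For property \eqref{itm1} \emph{I would} classify the two–factor direct product decompositions of $\Sym[(l_n)_n]$. Applying $\varphi$ to $\Sym[(k_n)_n]=J_S\times J_{S^c}$ gives an internal direct product $\Sym[(l_n)_n]=A\times B$ with $A=\varphi(J_S),B=\varphi(J_{S^c})$, and I claim every such decomposition has $A=J_T,B=J_{T^c}$. The key inputs are that each metric ultraproduct $\prod_n\Sym(l_n)/\cU$ is simple and nonabelian: as the images of the commuting normal subgroups $A,B$ generate this simple quotient, exactly one of $A,B$ lies in $I_\cU$. This partitions $\partial\beta\bbN$ into $U_A=\{\cU:A\not\subseteq I_\cU\}$ and $U_B$; since the support $\{\cU:a\notin I_\cU\}$ of a single element is open (its complement is $\bigcap_{\varepsilon}\{\cU:\{m:d_m(a_m,1)<\varepsilon\}\in\cU\}$, an intersection of clopen sets), both $U_A=\bigcup_{a\in A}\operatorname{supp}(a)$ and $U_B$ are open, and being complementary they are clopen, so $U_A=T^*$ for an honest $T\subseteq\bbN$. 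Then every $a\in A$ has $\operatorname{supp}(a)\subseteq T^*$, i.e.\ $a\in\bigcap_{\cU\in (T^c)^*}I_\cU=I_{T^c}=J_T$, whence $A\subseteq J_T$, $B\subseteq J_{T^c}$, and uniqueness of decomposition in $J_T\times J_{T^c}$ forces $A=J_T$. Setting $\theta(S)=T$ yields an inclusion-preserving bijection of $\cP(\bbN)/\Fin$, i.e.\ a Boolean automorphism, and property \eqref{itm1} follows using $\theta(S^c)=\theta(S)^c$.

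For property \eqref{itm2} \emph{I would} show that the maximal normal subgroups of $\Sym[(k_n)_n]$ are \emph{exactly} the $I_\cU$; then $\varphi$ carries them bijectively to those of $\Sym[(l_n)_n]$, which is the required matching of ultraproducts and defines $\varphi_\cU$. Each $I_\cU$ is maximal normal since its quotient is simple. Conversely, given a maximal normal $M$, the dichotomy above applied to the four atoms of the partition generated by two subsets $S,S'$ shows that exactly one atom $X$ has $J_X\not\subseteq M$, so $\cW_M=\{S:J_S\not\subseteq M\}$ is upward closed and closed under finite intersection, hence a nonprincipal ultrafilter with $J_S\subseteq M\iff S\notin\cW_M$. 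One then proves $M=I_{\cW_M}$. The one inclusion that is not formal is $M\subseteq I_{\cW_M}$: if some $x\in M$ had $\lim_{n\to\cW_M}d_n(x_n,1)=c>0$, then on a set $S_0\in\cW_M$ its coordinates move a definite fraction of points, and here I would invoke the uniform covering (bounded conjugate–width) property of finite symmetric groups — that an element of $\Sym(m)$ moving at least $\varepsilon m$ points has $\Alt(m)$ as a product of boundedly many of its conjugates, the bound depending only on $\varepsilon$ — to conclude that the normal closure of $x$ contains $J_{S_0}$, contradicting $J_{S_0}\not\subseteq M$. (The simplicity used above is itself a consequence of this same covering estimate.)

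\emph{The hard part will be} exactly this ZFC, continuity–free reconstruction of the $I_\cU$ and $J_S$ from the bare group structure: because $\varphi$ is a priori only an abstract isomorphism, neither the closure implicit in $I_\cU=\overline{\bigcup_{S\notin\cU}J_S}$ nor the passage to ``small'' elements may be transported through $\varphi$, so everything must be phrased via invariants — direct factors and maximal normal subgroups — that any isomorphism manifestly preserves. The genuine content is thereby funneled into two facts about finite symmetric groups that power all of the above, namely simplicity of the metric ultraproducts and the uniform conjugate–width estimate; the remaining work is the bookkeeping that matches the resulting Boolean and ultrafilter correspondences and checks that the $\theta$ of \eqref{itm1} is the homeomorphism of $\partial\beta\bbN$ underlying \eqref{itm2}.
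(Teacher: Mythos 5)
Your proposal is correct in substance but takes a genuinely different route from the paper. The paper reconstructs the pseudometric data through conjugacy classes of involutions: the map sending an involution class to its distance-to-identity profile identifies $\Cli(\Sym[(k_n)_n])$ with $[0,1]^\bbN/\Fin$, the structure $(0,1,\leq,\neg,\oplus)$ on the latter is shown to be definable from the bare group structure, and automorphisms of that structure are known (Cignoli--Dubuc--Mundici, Kaplansky) to be induced by autohomeomorphisms of $\partial\beta\bbN$; arbitrary elements are then handled by writing any $b$ as a product of two conjugates of an involution $a$ with the same distance profile. You instead capture the same data by purely group-theoretic invariants: the subgroups $J_S$ as the direct factors of the group, and the subgroups $I_\cU$ as its maximal normal subgroups --- both manifestly preserved by an abstract isomorphism. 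The price is heavier group-theoretic input (simplicity and nonabelianness of the metric ultraproducts, and the uniform conjugate-width covering estimate for $\Sym(n)$), but these are standard ZFC facts in the sofic-groups literature, and your route pays for itself: it avoids the MV-algebra/duality citations, it treats all elements uniformly rather than via involutions, it makes property \eqref{itm2} conceptually immediate (isomorphisms permute maximal normal subgroups), and the classification of maximal normal subgroups of $\Sym[(k_n)_n]$ as exactly the $I_\cU$ is a byproduct of independent interest. Your bookkeeping concern at the end does resolve cleanly: since $S\in\cU\Leftrightarrow J_S\not\subseteq I_\cU$, applying $\varphi$ gives $\varphi(I_\cU)=I_{\theta[\cU]}$ with the same $\theta$ produced in \eqref{itm1}, which is the statement of \eqref{itm2} up to the usual Stone-duality identification of $\theta$ with a point map.

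Two points need explicit patching, both routine. First, in the atom dichotomy for a maximal normal subgroup $M$, ruling out two atoms $X_1,X_2$ with $J_{X_i}\not\subseteq M$ requires the quotient $G/M$ to be nonabelian; unlike in your argument for \eqref{itm1}, this quotient is an abstract simple group, not a priori a metric ultraproduct, so you must also know that $\Sym[(k_n)_n]$ is perfect (otherwise $G/M$ could be cyclic of prime order, and two commuting normal subgroups could both surject onto it). Perfectness follows from Ore's theorem for alternating groups together with the observation that every element of the reduced product has a representative with even coordinates. Second, the inclusion $I_{\cW_M}\subseteq M$ that you call formal does not follow from the equivalence $J_S\subseteq M\Leftrightarrow S\notin\cW_M$ alone: that only yields $\bigcup_{T\in\cW_M}I_T\subseteq M$, and this union is a proper subset of $I_{\cW_M}$ whenever $\cW_M$ is not a P-point (your own closing paragraph, with the closure $I_\cU=\overline{\bigcup_{S\notin\cU}J_S}$, shows you are aware of this). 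The correct formal step is: once the covering argument gives the hard inclusion $M\subseteq I_{\cW_M}$, maximality of $M$ together with properness of $I_{\cW_M}$ forces equality. With these two repairs your outline is a complete proof.
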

	
	\begin{proof}
		Let $\Cli(\Sym[(k_n)_n])$ denote the set of conjugacy classes of involutions in the group $\Sym[(k_n)_n].$
		Consider the map $D_1$ which maps $(a_n)_n \in \Sym[(k_n)_n]$ to the element $D_1( (a_n)_n ) = (d_{k_n}(1,a_n))_n$ of the  metric reduced product $[0,1]^\bbN / \Fin$ (i.e.\ the set of $[0,1]$-valued sequences modulo convergence to zero).
		This induces a bijection, still denoted $D_1$, between the sets $\Cli(\Sym[(k_n)_n])$ and $[0,1]^\bbN / \Fin$. Expand $[0,1]^\bbN / \Fin$ to the structure \[ ([0,1]^\bbN / \Fin,0,1,\leq,\neg,\oplus),\] defined as follows: 
		\begin{itemize}
			\item $0$ is represented by the constant sequence $(0)_n$,
			\item $1$ is represented by the constant sequence $(1)_n$,
			\item $(x_n)_n \leq (y_n)_n$ if and only if for every $\varepsilon>0,$ $x_n < y_n+\varepsilon$ for cofinitely many $n\in \bbN,$
			\item  $\neg (x_n)_n =(1-x_n)_n,$
			\item $(x_n)_n \oplus (y_n)_n = ( \min(1,x_n+y_n))_n.$
		\end{itemize}
		
		We claim that this structure is, after translation to $\Cli(\Sym[(k_n)_n])$ via the map $D_1$, definable from the group structure on $\Sym[(k_n)_n]$. Firstly, one gets that for all $C_1,C_2 \in \Cli(\Sym[(k_n)_n]):$
		\[ D_1(C_1) \leq D_1(C_2) \text{ if and only if } (C_1)^2 \subseteq (C_2)^2.\] 
		
		Thus the entire lattice structure is definable over $\Sym[(k_n)_n]$. In addition,  it is straightforward to check that whenever $C_1,C_2 \in \Cli(\Sym[(k_n)_n])$, the element $D_1(C_1) \oplus D_1(C_2)$ of $[0,1]^\bbN / \Fin$ is precisely the maximum in $([0,1]^\bbN / \Fin, \leq)$ of the following set:
		\[ \{ D_1(C') : C' \in  \Cli(\Sym[(k_n)_n]) \text{ and }(\exists a \in C_1)(\exists b\in C_2)\, ab \in C' \}.\]
		Finally, for an involution $a \in \Sym[(k_n)_n],$ the element $\neg D_1(a)$ of $[0,1]^\bbN / \Fin$ is uniquely determined by being equal to $D_1(b)$ for some involution $b \in \Sym[(k_n)_n]$ that commutes with $a$, and satisfies $D_1(a) \oplus D_1(b) = 1$ and $D_1(ab)=1$.
		
		Similarly, there exists an isomorphism $D_2$ between $\Cli(\Sym[(l_n)_n])$ and $[0,1]^\bbN / \Fin$. Because of the definability showcased above, $\varphi$ now induces via $D_1$ and $D_2$ an automorphism of the structure $([0,1]^\bbN / \Fin,0,1,\leq,\neg,\oplus).$ However, all automorphisms of the latter structure are induced by autohomeomorphisms of $\partial \beta \bbN,$ via the identification $[0,1]^\bbN / \Fin = C(\partial \beta \bbN,[0,1])$ (see \cite{CIGNOLI200437}*{Proposition 4.2} and \cite{MR26240}). Hence, there exists an autohomeomorphism $\theta$ of the compact space $\partial \beta \bbN$ such that for all involutions $(a_n)_n \in \Sym[(k_n)_n]$, $(b_n)_n \in \Sym[(l_n)_n]$ and all $\cU \in \partial \beta \bbN:$
		\[ (b_n)_n = \varphi( (a_n)_n ) \quad \Rightarrow \quad \lim_{n \to \cU} d_{l_n}(1,b_n) = \lim_{n \to \theta(\cU)} d_{k_n}(1,a_n). \]
		
		By Stone duality, $\theta$ can be identified with an automorphism of the Boolean algebra $\cP(\bbN)/\Fin$ and the above statement after this identification 
		gives that: 
		\[
		a=_S 1 \Rightarrow \varphi(a)=_{\theta(S)} 1,
		\]
		for all involutions  $a \in \Sym[(k_n)_n]$ and $S \in \mathcal P(\bbN)/\Fin$. Now let $b$ be an arbitrary element of $\Sym[(k_n)_n]$, let $S \in \mathcal P(\bbN)/\Fin$ and assume that $b=_S 1$. Select an involution $a \in \Sym[(k_n)_n]$ such that the sequences $(d_{k_n}(1,a_n))_n$ and $(d_{k_n}(1,b_n))_n$ determine the same element of the metric reduced product $[0,1]^\bbN / \Fin$. It follows that $\varphi(a)=_{\theta(S)} 1$. Basic combinatorics shows that $b$ is a product of two conjugates of $a$.  Now, observe next that, since $\varphi$ is a group automorphism, $\varphi(b)$ can be written as the product of two conjugates of $\varphi(a)$, and therefore it follows that also $\varphi(b)=_{\theta(S)} 1$.
		
		This completes the proof of \eqref{itm1}. For property \eqref{itm2}, let $\cU$ be any nonprincipal ultrafilter, let $a = (a_n)_n,b =(b_n)_n \in \Sym[(k_n)_n]$ and  $\varphi(a) = (a'_n)_n, \varphi(b) = (b'_n)_n \in \Sym[(l_n)_n].$
		Then (still identifying $\theta$ both with an element of $ \Aut(\partial \beta \bbN)$ and with the associated element of $\Aut(\cP(\bbN) / \Fin)$), without making a distinction in notation):
		\[\
		\begin{aligned}
			\relax	[a]_{\theta(\cU)}=[b]_{\theta(\cU)}&\Leftrightarrow \{ n: a_n=b_n \}\in \theta(\cU)\\&\Leftrightarrow \theta \left(\{ n:a_n=b_n  \}\right)\in\cU\\
			&\Leftrightarrow \{ n: a'_n =b'_n\}\in\cU\\
			&\Leftrightarrow [\varphi(a)]_{\cU}=[\varphi(b)]_{\cU}.
		\end{aligned}
		\]
		Here, in the penultimate step we made use of \eqref{itm1}. It thus follows that $\varphi$ induces an isomorphism $\varphi_{\theta(\cU)}:  \prod_n  \Sym(k_n) / \theta(\cU) \to \prod_n  \Sym(l_n) / \cU$.
		
	\end{proof}

	By \cite{alekseevthom}*{Proposition 5.1}, for every ultrafilter $\cU \in \partial \beta \bbN$ the induced isomorphism  $\varphi_\cU: \prod_n  \Sym(k_n) / \cU \to \prod_n  \Sym(l_n) / \theta^{-1}(\cU)$ is isometric. Hence, we find the following corollary. 
	
	\begin{corollary}\label{cor.isometric}
		Every isomorphism $\varphi: \Sym[(k_n)_n] \to \Sym[(l_n)_n]$ is automatically isometric. Even more, for every infinite $S \in \cP(\bbN) / \Fin$ and every $a,b \in \Sym[(k_n)_n]:$ \[ d^S(a,b) = d^{\theta(S)}(\varphi(a),\varphi(b)).\] 
	\end{corollary}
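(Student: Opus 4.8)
The plan is to deduce the $d^S$-identity from the ultrafilter-by-ultrafilter isometry already available in Theorem~\ref{th.action boundary}\eqref{itm2}, using the standard representation of a $\limsup$ along $S$ as a supremum of ultrafilter limits. Concretely, I would first record the elementary fact that for every bounded sequence $(x_n)_n$ of reals and every infinite $S \subseteq \bbN$ one has
\[
\limsup_{n \in S} x_n = \max\{\, \textstyle\lim_{n \to \cU} x_n \ :\ \cU \in \partial\beta\bbN,\ S \in \cU \,\}.
\]
The inequality ``$\geq$'' holds because $\{n \in S : x_n \geq L+\varepsilon\}$ is finite for $L = \limsup_{n\in S}x_n$ and every $\varepsilon>0$, while ``$\leq$'' (with the maximum attained) follows by extending the finite-intersection family $\{S\} \cup \{\{n : x_n > L-\varepsilon\} : \varepsilon > 0\}$ to an ultrafilter. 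Applying this to $x_n = d_{k_n}(a_n,b_n)$ and writing $d_\cU$ for the metric on the ultraproduct $\prod_n \Sym(k_n)/\cU$, I get
\[
d^S(a,b) = \max_{\cU \ni S}\ \textstyle\lim_{n\to\cU} d_{k_n}(a_n,b_n) = \max_{\cU \ni S} d_\cU([a]_\cU,[b]_\cU),
\]
and similarly $d^{\theta(S)}(\varphi(a),\varphi(b)) = \max_{\cV \ni \theta(S)} d_\cV([\varphi(a)]_\cV,[\varphi(b)]_\cV)$ on the target side.

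Next I would insert the isometry coordinate by coordinate in the ultrafilter index. By Theorem~\ref{th.action boundary}\eqref{itm2} the map $\varphi$ induces for each nonprincipal $\cU$ an isomorphism $\varphi_\cU : \prod_n \Sym(k_n)/\cU \to \prod_n \Sym(l_n)/\theta^{-1}(\cU)$ sending $[a]_\cU$ to $[\varphi(a)]_{\theta^{-1}(\cU)}$, and by \cite{alekseevthom}*{Proposition 5.1} each $\varphi_\cU$ is isometric, so
\[
d_\cU([a]_\cU,[b]_\cU) = d_{\theta^{-1}(\cU)}([\varphi(a)]_{\theta^{-1}(\cU)},[\varphi(b)]_{\theta^{-1}(\cU)}).
\]
Substituting this into the first display and reindexing by $\cV = \theta^{-1}(\cU)$, that is $\cU = \theta(\cV)$, the remaining point is that $\cU$ runs over the ultrafilters containing $S$ exactly when $\cV$ runs over those containing $\theta(S)$. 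This is immediate from the correspondence $A \in \theta(\cW) \Leftrightarrow \theta(A) \in \cW$ used in the proof of Theorem~\ref{th.action boundary}: with $A = S$ and $\cW = \cV$ it gives $S \in \theta(\cV) \Leftrightarrow \theta(S) \in \cV$. Hence the two maxima coincide and $d^S(a,b) = d^{\theta(S)}(\varphi(a),\varphi(b))$.

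Finally I would note that the plain ``isometric'' assertion is the special case $S = \bbN$: since $\theta$ is an automorphism of $\cP(\bbN)/\Fin$ it fixes the top element, so $\theta(\bbN) = \bbN$, and $d^{\bbN}$ is exactly the metric of the reduced product. Thus the ``even more'' clause subsumes the first.

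I expect the only real obstacle to be bookkeeping rather than mathematics: keeping the direction of $\theta$ consistent as one passes between its guise as a homeomorphism of $\partial\beta\bbN$ and as an automorphism of $\cP(\bbN)/\Fin$, and thereby making sure that $\cU \mapsto \theta^{-1}(\cU)$ is genuinely a bijection from $\{\cU : S \in \cU\}$ onto $\{\cV : \theta(S) \in \cV\}$. The conceptual work has all been done in Theorem~\ref{th.action boundary} together with the cited isometry of ultraproduct isomorphisms; the ultrafilter-limit description of $\limsup$ serves only to lift that pointwise isometry to the level of the pseudometrics $d^S$.
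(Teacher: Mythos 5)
Your proposal is correct and takes essentially the same route as the paper: the paper's proof consists precisely of invoking Theorem~\ref{th.action boundary}\eqref{itm2} together with \cite{alekseevthom}*{Proposition 5.1} to conclude that each induced ultraproduct isomorphism $\varphi_\cU$ is isometric, and then deducing the identity for the pseudometrics $d^S$. Your write-up simply makes explicit the bookkeeping the paper leaves to the reader, namely the representation of $\limsup_{n\in S}$ as a maximum of ultrafilter limits over $\cU \ni S$ and the fact that $\cU \mapsto \theta^{-1}(\cU)$ is a bijection from $\{\cU : S \in \cU\}$ onto $\{\cV : \theta(S) \in \cV\}$.
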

	
	\begin{corollary}\label{cor.noniso}
		There exist precisely $2^{\aleph_0}$ many different metric reduced product groups $\Sym[(k_n)_n]$ up to isomorphism.
	\end{corollary}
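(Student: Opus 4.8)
The plan is to prove matching upper and lower bounds of $2^{\aleph_0}$. The upper bound is immediate: a metric reduced product $\Sym[(k_n)_n]$ is determined by the sequence $(k_n)_n \in \bbN^\bbN$, and there are only $\aleph_0^{\aleph_0} = 2^{\aleph_0}$ such sequences, so there are at most $2^{\aleph_0}$ isomorphism classes. The content is therefore entirely in the lower bound, where I would exhibit a family $(\Sym[(k_n^\epsilon)_n])_{\epsilon \in \{0,1\}^\bbN}$ of pairwise non-isomorphic groups indexed by $\{0,1\}^\bbN$.

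The starting point is the non-convergence result of \cite{alekseevthom}: there is a sequence of metric sentences $(\varphi_i)_{i\in\bbN}$ such that for every $\epsilon \in \{0,1\}^\bbN$ one can, by a diagonal choice from the realizing finite symmetric groups, select an increasing sequence $(k_n^\epsilon)_n$ (so in particular $\lim_n k_n^\epsilon = \infty$) with $\lim_n \varphi_i^{\Sym(k_n^\epsilon)} = \epsilon(i)$ for every $i$. Since each of these value-sequences converges, it follows that $\lim_{n\to\cU}\varphi_i^{\Sym(k_n^\epsilon)} = \epsilon(i)$ for \emph{every} nonprincipal ultrafilter $\cU$; equivalently, every metric ultraproduct $\prod_n \Sym(k_n^\epsilon)/\cU$ satisfies $\varphi_i = \epsilon(i)$ for all $i$.

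The heart of the argument is to identify an isomorphism invariant that recovers $\epsilon$. I would use the set of metric theories
\[
\mathcal T\big((k_n)_n\big) := \big\{ \Th\big(\textstyle\prod_n \Sym(k_n)/\cU\big) : \cU \in \partial\beta\bbN \big\}.
\]
To see that this is an isomorphism invariant, suppose $\varphi: \Sym[(k_n)_n] \to \Sym[(l_n)_n]$ is an isomorphism inducing $\theta \in \Aut(\partial\beta\bbN)$ as in Theorem~\ref{th.action boundary}. By Theorem~\ref{th.action boundary}\eqref{itm2}, for each $\cU$ the map $\varphi$ induces an isomorphism between $\prod_n\Sym(k_n)/\cU$ and $\prod_n\Sym(l_n)/\theta^{-1}(\cU)$, which by \cite{alekseevthom}*{Proposition 5.1} is isometric; isometrically isomorphic metric structures are elementarily equivalent in continuous logic, so $\Th(\prod_n\Sym(k_n)/\cU) = \Th(\prod_n\Sym(l_n)/\theta^{-1}(\cU))$. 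As $\theta$ is a bijection of $\partial\beta\bbN$, the ultrafilter $\theta^{-1}(\cU)$ ranges over all nonprincipal ultrafilters as $\cU$ does, whence $\mathcal T((k_n)_n) = \mathcal T((l_n)_n)$.

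To conclude, observe that by construction every theory in $\mathcal T((k_n^\epsilon)_n)$ assigns $\varphi_i$ the value $\epsilon(i)$, so $\epsilon$ can be read off from $\mathcal T((k_n^\epsilon)_n)$ by evaluating the $\varphi_i$ on any single member. Hence distinct $\epsilon$ give distinct invariants and thus non-isomorphic groups, producing $2^{\aleph_0}$ isomorphism classes and matching the upper bound. The main obstacle I anticipate is not the preservation of the invariant, which follows cleanly from Theorem~\ref{th.action boundary} and \cite{alekseevthom}, but the extraction step: one must verify that \cite{alekseevthom} genuinely yields, for each target $\epsilon$, a sequence along which \emph{all} the values $\varphi_i^{\Sym(k_n^\epsilon)}$ converge to $\epsilon(i)$, so that the invariant is constant across all ultrafilters and $\epsilon$ is unambiguously recoverable; granting this, the remainder is formal.
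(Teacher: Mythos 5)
Your proposal is correct and follows essentially the same route as the paper: both use the Alekseev--Thom construction of sequences $(k_n^\nu)_n$ along which the sentence values $\varphi_m^{\Sym(k_n^\nu)}$ converge to $\nu(m)$, observe that consequently every nonprincipal ultraproduct evaluates these sentences at $\nu(m)$, and then invoke Theorem~\ref{th.action boundary}\eqref{itm2} (together with the isometry of induced ultraproduct isomorphisms from \cite{alekseevthom}*{Proposition 5.1}) to transfer non-isomorphism of ultraproducts back to the reduced products. Your explicit packaging of this as the invariant $\mathcal{T}((k_n)_n)$ and your worry about the extraction step are both fine; the latter is exactly what the paper's citation of the proof of \cite{alekseevthom}*{Theorem 5.5} supplies.
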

	\begin{proof}
		By the proof of \cite{alekseevthom}*{Theorem 5.5}, one can construct a sequence of metric formulas $(\varphi_m)_{m\in \bbN}$ and for every $\nu \in \{0,1\}^\bbN$ a strictly increasing sequence $(k^\nu_n)_{n\in \bbN}$ of natural numbers such that for every $\nu  \in \{0,1\}^\bbN$ and every $m,$ the sequence of real numbers $\varphi_m^{\Sym(k_n^\nu)}$ converges to $\nu(m)$ as $n\to\infty$. It follows that when $\cU$ is a nonprincipal ultrafilter  on $\bbN$, the evaluation of the sentence $\varphi_m$ in the metric group $\Sym[(k^\nu_n)_n] / \cU$ is precisely $\nu(m).$ Therefore $\Sym[(k^\nu_n)_n] / \cU$ and $\Sym[(k^\xi_n)_n] / \mathcal{V}$ are never isomorphic for two different elements $\nu, \xi \in \{0,1\}^\bbN.$ It follows by Theorem~\ref{th.action boundary} that then the groups $\Sym[(k^\nu_n)_n]$ and $\Sym[(k^\xi_n)_n]$ are non-isomorphic as well.
	\end{proof}
	
	\begin{remark}
		Since there are much more ultrafilters on $\bbN$ than sequences of natural numbers, the number of isomorphism types of the ultraproduct sofic groups $\Sym(\cU)$ can, in contrast to the case of reduced product sofic groups, vary between models of set theory. Indeed, it was shown by Thomas in \cite{MR2607888} that assuming
		$\mathsf{\neg CH}$, there are $2^{2^{\aleph_0}}$-many isomorphism classes of metric ultraproducts of symmetric groups; see also \cite{MR2802082}*{Theorem~5.1} for a more general  $\mathsf{\neg CH}$-result.
	\end{remark}
	
	\subsection{Main result}
	
	We are now ready to put everything together and give a proof of our main result. Recall our definition of triviality of an isomorphism from Definition \ref{def.triv}.
	
	\begin{theorem} Assuming $\OCA$ and $\MAsl$, every isomorphism $\varphi:\Sym[(k_n)_n]\to\Sym[(l_n)_n]$ is trivial.
	\end{theorem}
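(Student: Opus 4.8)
The plan is to combine the purely set-theoretic lifting theorem of \cite{metricliftingtheorem} with the $\ZFC$ structural results already established, reducing an arbitrary isomorphism --- after reindexing by an almost permutation --- to one of product form, at which point Lemma~\ref{lem.coordfixcase} finishes the job. Everything hinges on verifying that $\varphi$ feeds into the lifting machinery and on correctly reassembling the output into a conjugate of some $\psi_f$.

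First I would extract from Theorem~\ref{th.action boundary} the induced automorphism $\theta$ of $\cP(\bbN)/\Fin$, together with the compatibility $d^S(a,b) = d^{\theta(S)}(\varphi(a),\varphi(b))$ recorded in Corollary~\ref{cor.isometric}. This says precisely that $\varphi$ is \emph{coordinate-respecting} with respect to $\theta$ in the sense of \cite{metricliftingtheorem}, and since $\varphi$ is automatically isometric (Corollary~\ref{cor.isometric}) it is in particular continuous, so the regularity hypotheses of the lifting machinery are met. Invoking $\OCA$ and $\MAsl$, the main result of \cite{metricliftingtheorem} then yields that $\theta$ is trivial --- induced by an almost permutation $f$ of $\bbN$, with $\theta(S) = f^{-1}[S]$ modulo finite --- and that $\varphi$ itself is, after permuting coordinates by $f$, of product form: there are maps $h_n\colon \Sym(k_{f(n)}) \to \Sym(l_n)$ with $\varphi((a_m)_m) = (h_n(a_{f(n)}))_n$.

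The point to be careful about is that no size condition $\lim_n k_{f(n)}/l_n = 1$ is needed to make sense of the reindexing alone. Because $f$ is an almost permutation, the relabeling $R_f\colon \Sym[(k_m)_m] \to \Sym[(k_{f(n)})_n]$, $(a_m)_m \mapsto (a_{f(n)})_n$, is a genuine isomorphism onto the reduced product over the exactly matching sequence $(k_{f(n)})_n$ (an element of $N((k_m)_m)$ is sent to an element of $N((k_{f(n)})_n)$ since $f$ has cofinite range and is eventually injective). The displayed product form then exhibits $\Psi := \varphi \circ R_f^{-1}\colon \Sym[(k_{f(n)})_n] \to \Sym[(l_n)_n]$, $(b_n)_n \mapsto (h_n(b_n))_n$, as a surjective group isomorphism of product form.

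Finally I would apply Lemma~\ref{lem.coordfixcase} to $\Psi$, with the sequence $(k_{f(n)})_n$ playing the role of $(k_n)_n$. This delivers both the size condition $\lim_n k_{f(n)}/l_n = 1$ --- which is exactly what is required for $\psi_f\colon \Sym[(k_n)_n] \to \Sym[(l_n)_n]$ to be well defined --- and an element $\sigma \in \Sym[(l_n)_n]$ with $\Psi((b_n)_n) = \ad_\sigma((b_n \updownarrow \Sym(l_n))_n)$. Unwinding, $\varphi = \Psi \circ R_f = \ad_\sigma \circ \psi_f$, so $\varphi$ is $\psi_f$ up to conjugation by $\sigma$, i.e.\ trivial in the sense of Definition~\ref{def.triv}. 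The only real obstacle is the correct application of the lifting theorem of \cite{metricliftingtheorem}: one must confirm that the coordinate-respecting property furnished by Corollary~\ref{cor.isometric} is precisely the hypothesis that theorem demands, and that the almost permutation it returns is the same $f$ inducing $\theta$, so that the reindexing $R_f$ straightens $\varphi$ into product form rather than leaving a residual permutation of coordinates.
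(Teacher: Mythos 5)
Your proposal is correct and takes essentially the same route as the paper: extract $\theta$ from Theorem~\ref{th.action boundary}, realize it by an almost permutation $f$, reindex (your $R_f$ is the inverse of the paper's $\psi_{f^{-1}}$, well defined without any size condition precisely as you argue), apply the lifting theorem to get product form, and finish with Lemma~\ref{lem.coordfixcase} to obtain both $\lim_n k_{f(n)}/l_n = 1$ and the conjugating element $\sigma$. The only cosmetic difference is bookkeeping of citations: the paper first obtains $f$ from \cite{TrivIso}*{Theorem 1} (an $\OCA$ consequence) and only then invokes the main theorem of \cite{metricliftingtheorem} on the reindexed, identity-coordinate-respecting map $\hat\varphi = \varphi\circ\psi_{f^{-1}}$, whereas you fold both steps into a single appeal to the lifting machinery.
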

	\begin{proof}
		Let $\varphi:\Sym[(k_n)_n]\to\Sym[(l_n)_n]$ be an isomorphism. 	By Theorem~\ref{th.action boundary}, there exists $\theta\in\Aut(\cP(\bbN)/\Fin)$ such that:
		$$\limsup_{n\in S} d_{k_n}(a_n,b_n) = 0 \quad \Leftrightarrow \quad \limsup_{n\in\theta(S)} d_{l_n}(a'_n,b'_n) = 0,$$ for all $a,b \in \Sym[(k_n)_n]$, $\varphi(a) = (a'_n)_n, \varphi(b) = (b'_n)_n$ and any infinite subset $S \subseteq \bbN$.
		By \cite{TrivIso}*{Theorem~1}, $\OCA$ implies that there is an almost permutation $f:\bbN\to\bbN$ such that $f^{-1}[S]=\theta(S)$ for every $S \in \cP(\bbN)/\Fin$. Recall from the introduction that $f^{-1}$ induces an isomorphism $\psi_{f^{-1}}: \Sym[(k_{f(n)})_n] \to \Sym[(k_n)_n]$. Consider next the isomorphism $\hat{\varphi}:= \varphi \circ \psi_{f^{-1}}:  \Sym[(k_{f(n)})_n] \to \Sym[(l_n)_n]$ and note that for all $a,b \in \Sym[(k_{f(n)})_n]$ and all infinite subsets $S \subseteq \bbN$, for $\hat \varphi(a) = (a_n')_n,\hat \varphi(b) = (b_n')_n$:
		$$\limsup_{n\in S} d_{k_{f(n)}}(a_n,b_n) = 0 \quad \Leftrightarrow \quad \limsup_{n\in S} d_{l_n}(a'_n,b'_n) = 0.$$
		The main theorem of \cite{metricliftingtheorem} implies that every such map $\hat{\varphi}$ is of product form. By the structure result for isomorphisms of product form, Lemma~\ref{lem.coordfixcase}, we have that $\lim \frac{k_{f(n)}}{l_n}=1$ and that there exists $\sigma \in \Sym[(l_n)_n]$ such that $\hat{\varphi}$ maps every element $(b_n)_n \in \Sym[(k_{f(n)})_n]$ to the element $\ad_\sigma((b_n \updownarrow \Sym(l_n))_n)$ of $\Sym[(l_n)_n].$
		It follows that $\varphi$ maps the element $(a_n)_n \in \Sym[(k_n)_n]$ precisely to the element $\ad_\sigma((a_{f(n)} \updownarrow \Sym(l_n))_n)$, hence $\varphi$ is trivial.
	\end{proof}
	
	\begin{definition}
		Let's call the sequence $(l_n)_n$ an \emph{asymptotic almost rearrangement} of $(k_n)_n$ if there exists an almost permutation $f: \bbN \to \bbN$ such that $\displaystyle\lim_{n\to\infty}\nicefrac{l_n}{k_{f(n)}}=1$. Call the sequence $(l_n)_n$ an \emph{asymptotically equivalent almost rearrangement} of $(k_n)_n$ if $\displaystyle\lim_{n\to\infty}\nicefrac{l_n}{k_n}=1$ and there exists an almost permutation $f: \bbN \to \bbN$ such that $(k_{f(n)})_n = (l_n)_n.$ 
	\end{definition}
	
	We obtain the following immediate corollary.
	\begin{corollary} Assume $\OCA$ and $\MAsl$.
		There exists an isomorphism $\Sym[(k_n)_n] \cong \Sym[(l_n)_n]$ if and only if $(k_n)_n$ and $(l_n)_n$ are asymptotic almost rearrangements of each other.    
	\end{corollary}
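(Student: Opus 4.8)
Since the statement is an immediate corollary, the plan is to unwind the two directions of the biconditional against the triviality theorem just proved and the construction of the maps $\psi_f$ recorded in the introduction; note that only the forward implication will need $\OCA$ and $\MAsl$. For the (easy) backward implication, suppose $(k_n)_n$ and $(l_n)_n$ are asymptotic almost rearrangements of each other. In particular $(l_n)_n$ is one of $(k_n)_n$, so there is an almost permutation $f$ of $\bbN$ with $\lim_n \nicefrac{l_n}{k_{f(n)}} = 1$. As this limit is $1 \neq 0$, it is equivalent to $\lim_n \nicefrac{k_{f(n)}}{l_n} = 1$, which is precisely the hypothesis under which the introduction manufactures a well-defined isomorphism $\psi_f : \Sym[(k_n)_n] \to \Sym[(l_n)_n]$; this already gives $\Sym[(k_n)_n] \cong \Sym[(l_n)_n]$, and it consumes only one of the two rearrangement conditions.

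For the converse I would feed an arbitrary isomorphism $\varphi : \Sym[(k_n)_n] \to \Sym[(l_n)_n]$ into the triviality theorem: under $\OCA$ and $\MAsl$ it yields, up to conjugation, an equality $\varphi = \psi_f$ for some almost permutation $f$ with $\lim_n \nicefrac{k_{f(n)}}{l_n} = 1$. Rewriting this as $\lim_n \nicefrac{l_n}{k_{f(n)}} = 1$ witnesses, via Definition~\ref{def.triv}, that $(l_n)_n$ is an asymptotic almost rearrangement of $(k_n)_n$. To secure the symmetric clause I would pass to the inverse almost permutation $f^{-1}$ and reindex along $m = f(n)$, turning $\lim_n \nicefrac{k_{f(n)}}{l_n} = 1$ into $\lim_m \nicefrac{k_m}{l_{f^{-1}(m)}} = 1$, so that $f^{-1}$ witnesses that $(k_n)_n$ is an asymptotic almost rearrangement of $(l_n)_n$. (Equivalently, one could simply run $\varphi^{-1}$ back through the triviality theorem.) Hence the two sequences are asymptotic almost rearrangements of each other.

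The only point requiring any care — and the closest thing to an obstacle — is bookkeeping of conventions: matching the direction of the quotient $\nicefrac{l_n}{k_{f(n)}}$ in the definition of asymptotic almost rearrangement against the $\nicefrac{k_{f(n)}}{l_n}$ appearing in the triviality clause, and checking that the reindexing step with $f^{-1}$ is legitimate. Since the almost permutations of $\bbN$ form a group under composition modulo agreement on cofinite sets, and since every limit condition in sight is asymptotic and hence insensitive to finite modifications, the fact that $f \circ f^{-1}$ equals the identity only on a cofinite set is harmless, and the argument closes.
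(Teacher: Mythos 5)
Your proof is correct and matches the paper's intent: the paper states this as an immediate corollary (with no written proof), and your unwinding --- the $\psi_f$ construction from the introduction for the backward direction, and the triviality theorem plus inversion of the almost permutation (or of $\varphi$ itself) for the forward direction --- is exactly the intended argument. Your attention to the direction of the ratios and to the legitimacy of reindexing along $f^{-1}$ covers the only points where care is needed.
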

	
	\begin{definition}
		We denote by ${\rm Out}((k_n)_n)$ the group of almost permutations $f \colon \bbN \to \bbN$ modulo equality on co-finite sets such that $$\displaystyle\lim_{n\to\infty}\frac{k_{f(n)}}{k_n}=1.$$
	\end{definition}
	
	The following corollary gives a complete description of the automorphism group of a metric reduced product of symmetric groups.
	
	\begin{corollary} Assume $\OCA$ and $\MAsl$.
		\begin{enumerate}
			\item If the sequence $(k_n)_n$ does not have any nontrivial asymptotically equivalent almost rearrangements, then $\Sym[(k_n)_n]$ is a complete group, i.e., all automorphisms are inner. This happens if and only if $$\liminf_{n \neq m} |\log(k_{n}) - \log(k_{m})|>0.$$
			\item The group of outer automorphisms of $\Sym[(k_n)_n]$ is isomorphic to ${\rm Out}((k_n)_n)$ and the group extension
			$$1 \to \Sym[(k_n)_n] \to {\rm Aut}(\Sym[(k_n)_n]) \to {\rm Out}((k_n)_n) \to 1$$ is split.
		\end{enumerate}
	\end{corollary}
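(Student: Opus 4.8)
The plan is to extract both statements from the normal form for automorphisms provided by our main theorem, combined with the induced action on $\cP(\bbN)/\Fin$ from Theorem~\ref{th.action boundary}. Taking $(l_n)_n = (k_n)_n$ in Definition~\ref{def.triv}, every $\varphi \in \Aut(\Sym[(k_n)_n])$ has the form $\varphi = \ad_\sigma \circ \psi_f$ for some $\sigma \in \Sym[(k_n)_n]$ and some almost permutation $f$ with $\lim_n \frac{k_{f(n)}}{k_n} = 1$, i.e.\ $[f] \in {\rm Out}((k_n)_n)$. First I would observe that $[f]$ is determined by $\varphi$: the automorphism $\theta_\varphi$ of $\cP(\bbN)/\Fin$ from Theorem~\ref{th.action boundary} satisfies $\theta_\varphi(S) = f^{-1}[S]$ (the contribution of the inner factor $\ad_\sigma$ is trivial, since conjugation is coordinatewise isometric for the bi-invariant metrics), and distinct classes of almost permutations induce distinct automorphisms of $\cP(\bbN)/\Fin$. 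The crucial consequence is that $\psi_f$ is inner if and only if $\theta_{\psi_f}$ is the identity if and only if $[f]$ is trivial; hence $\varphi = \ad_\sigma \circ \psi_f$ is inner precisely when $[f] = [\mathrm{id}]$.

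For part (1), the previous paragraph shows that $\Sym[(k_n)_n]$ has only inner automorphisms if and only if ${\rm Out}((k_n)_n) = 1$, which by the definitions is exactly the statement that $(k_n)_n$ has no nontrivial asymptotically equivalent almost rearrangement. It remains to identify this with the metric condition; writing $x_n = \log k_n$, membership $[f] \in {\rm Out}((k_n)_n)$ says $\lim_n(x_{f(n)} - x_n) = 0$. If $\liminf_{n\ne m}|x_n - x_m| = 0$, I would greedily choose pairwise disjoint pairs $(n_j, m_j)$ with ever-larger indices and $|x_{n_j} - x_{m_j}| \to 0$ (removing finitely many indices leaves the $\liminf$ unchanged), and take $f$ to be the involution swapping each pair; this is a nontrivial almost permutation with $x_{f(n)} - x_n \to 0$, so ${\rm Out}((k_n)_n) \ne 1$. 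Conversely, if $\liminf_{n\ne m}|x_n - x_m| = \delta > 0$ then $|x_n - x_m| \ge \delta/2$ for all distinct $n,m$ past some $N_0$; since any almost permutation is injective on a cofinite set and hence satisfies $f(n) \to \infty$, a nontrivial $f$ would have $f(n) \ne n$ with $n, f(n) \ge N_0$ for infinitely many $n$, forcing $|x_{f(n)} - x_n| \ge \delta/2$ infinitely often and contradicting $[f] \in {\rm Out}((k_n)_n)$. Thus ${\rm Out}((k_n)_n) = 1$.

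For part (2), I would build the two maps of the extension. Property~\eqref{itm1} of Theorem~\ref{th.action boundary} makes $\varphi \mapsto \theta_\varphi$ a homomorphism into $\Aut(\cP(\bbN)/\Fin)$ (compose the characterizing biconditionals), and $g \mapsto (S \mapsto g[S])$ is an injective homomorphism of the almost-permutation group into $\Aut(\cP(\bbN)/\Fin)$; define $q \colon \Aut(\Sym[(k_n)_n]) \to {\rm Out}((k_n)_n)$ to be the composite, so that $q(\varphi) = [f^{-1}]$ where $\varphi = \ad_\sigma \circ \psi_f$. Then $q$ is a homomorphism, $\ker q$ is exactly the inner automorphisms by the first paragraph, and the assignment $s(h) := \psi_{h^{-1}}$ is a right inverse: $q\circ s = \mathrm{id}$. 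Moreover $s$ is a homomorphism because $\psi_a \circ \psi_b = \psi_{b\circ a}$ holds in the reduced product, which I would check from the cutting and lifting estimates, using $k_{a(n)}/k_n \to 1$ and $k_{b(n)}/k_n \to 1$ to absorb two successive adjustments into one. Finally, exactness at $\Sym[(k_n)_n]$ is injectivity of $\sigma \mapsto \ad_\sigma$, i.e.\ triviality of the center, which I would prove by an averaging argument: if the class of $z = (z_n)_n$ were central and nontrivial, then $z_n$ would move at least a fixed proportion $\varepsilon$ of $\{1,\dots,k_n\}$ along an infinite set of $n$, and the expected value of $d_{k_n}(z_n a_n z_n^{-1}, a_n)$ over a uniformly random $a_n$ would be at least $\varepsilon/2$ for large such $n$, producing a witness $(a_n)_n$ that fails to commute with $z$.

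The main obstacle is the composition bookkeeping in part (2): one must keep track of the fact that $\varphi \mapsto f$ reverses multiplication, which is why $q$ is built from $f^{-1}$ and the splitting from $\psi_{h^{-1}}$, and one must verify the identity $\psi_a \circ \psi_b = \psi_{b \circ a}$ in the reduced product. The latter amounts to showing that the two-step adjustment $\Sym(k_{b(a(n))}) \to \Sym(k_{a(n)}) \to \Sym(k_n)$ differs from the direct adjustment $\Sym(k_{b(a(n))}) \to \Sym(k_n)$ by a normalized Hamming error tending to $0$, which follows from the cutting and lifting lemma once all three indices are seen to be asymptotically equal. The combinatorial equivalence in part (1) and the triviality of the center are comparatively routine.
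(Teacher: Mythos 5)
Your proof is correct and follows the route the paper intends: the corollary is stated there without proof as an immediate consequence of the main theorem together with Theorem~\ref{th.action boundary}, and your verification — extracting a well-defined class $[f^{-1}]\in{\rm Out}((k_n)_n)$ from the induced automorphism $\theta_\varphi$ of $\cP(\bbN)/\Fin$, the anti-homomorphism bookkeeping $\psi_a\circ\psi_b=\psi_{b\circ a}$ that makes $h\mapsto\psi_{h^{-1}}$ a splitting, the triviality of the center via averaging, and the greedy/gap argument for the $\liminf$ criterion — is exactly the content being left to the reader. One remark: your reading of ``nontrivial asymptotically equivalent almost rearrangement'' as ``witnessed by an almost permutation that is not the identity on a cofinite set'' (equivalently ${\rm Out}((k_n)_n)\neq 1$) is the right one, since reading it as $(l_n)_n\neq(k_n)_n$ would make the stated equivalence with the $\liminf$ condition fail for sequences with infinitely many repeated values.
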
	
	
	One particular consequence of these main results is that under forcing axioms there exist (many) elementarily equivalent but non-isomorphic groups $\Sym[(k_n)_n]$ and $\Sym[(l_n)_n]$.
	
	\begin{proposition} \label{Prop.eleqnotiso}
		Assuming $\OCA$ and $\MAsl$, there exist sequences $(k_n)_n$ and $(l_n)_n$ with $\lim_n k_n = \lim_n l_n = \infty$ such that the groups $\Sym[(k_n)_n]$ and $\Sym[(l_n)_n]$ are elementarily equivalent (in the sense of metric model theory) but not isomorphic.
	\end{proposition}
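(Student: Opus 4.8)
The plan is to exploit the gap between the two notions in play: elementary equivalence of the reduced products is a coarse, $\ZFC$-provable invariant that only sees the \emph{limiting} behaviour of the factor theories $\Th(\Sym(k_n))$, whereas isomorphism is, under $\OCA$ and $\MAsl$, the rigid notion controlled by asymptotic almost rearrangements via the corollary above. I would therefore look for two sequences whose factor theories have the same limit but which cannot be matched by an almost permutation with asymptotic ratio $1$. The cleanest way to arrange the latter is to take one sequence to be a ``diluted'' copy of the other, repeating every term, while keeping the terms spread out enough that a ratio tending to $1$ forces equality of terms.

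Concretely, first I would use compactness of the space of complete continuous theories to choose a strictly increasing sequence $(k_n)_n$ of natural numbers with $k_{n+1}\geq 2k_n$ for all $n$ and such that the theories $\Th(\Sym(k_n))$ converge to some complete metric theory $T$; one gets a convergent subsequence by compactness and then thins it out to secure the doubling condition, which only reinforces convergence. I then define $(l_n)_n$ by doubling each term, i.e.\ $l_{2n-1}=l_{2n}=k_n$. Both sequences tend to infinity, and since $l_n=k_{\lceil n/2\rceil}$ with $\lceil n/2\rceil\to\infty$, the factor theories $\Th(\Sym(l_n))$ converge to the same $T$.

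The first thing to verify is that $\Sym[(k_n)_n]$ and $\Sym[(l_n)_n]$ are elementarily equivalent, and this is where the main work lies. I would invoke a metric Feferman--Vaught theorem, which expresses the value of any sentence in a metric reduced product $\prod_n G_n/\Fin$ as a fixed continuous combination of the sequences $(\tau^{G_n})_n$ (for finitely many auxiliary sentences $\tau$), the combination factoring through the Fréchet filter and hence through the mod-finite behaviour of these sequences. Because $\Th(\Sym(k_n))\to T$ and $\Th(\Sym(l_n))\to T$, every such auxiliary sequence converges, to a limit that depends only on $T$; the Feferman--Vaught combination then returns the same value for both reduced products. Hence every sentence takes the same value in $\Sym[(k_n)_n]$ and in $\Sym[(l_n)_n]$, so the two groups are elementarily equivalent. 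The delicate point is precisely that the reduced-product value must be computed from the tails, and so from the common limit $T$, rather than from the individual factors; a direct back-and-forth argument adapted to the convergent situation is an alternative route.

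It remains to see that the two groups are not isomorphic. Assuming $\OCA$ and $\MAsl$, the corollary shows that an isomorphism would force $(l_n)_n$ to be an asymptotic almost rearrangement of $(k_n)_n$, i.e.\ would provide an almost permutation $f$ of $\bbN$ with $\lim_n l_n/k_{f(n)}=1$. Since $k_{n+1}\geq 2k_n$, the ratio $k_i/k_j$ lies outside $(1/2,2)$ whenever $i\neq j$, so $l_n/k_{f(n)}\to 1$ forces $k_{\lceil n/2\rceil}=l_n=k_{f(n)}$, hence $f(n)=\lceil n/2\rceil$, for all sufficiently large $n$. But $n\mapsto\lceil n/2\rceil$ is two-to-one and therefore fails to be injective on every cofinite set, contradicting that $f$ is an almost permutation. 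Thus $(l_n)_n$ is not an asymptotic almost rearrangement of $(k_n)_n$, the two reduced products are non-isomorphic, and the plan is complete. The main obstacle is the elementary-equivalence step and the precise form of the metric Feferman--Vaught reduction; the non-isomorphism is then immediate from the rigidity corollary.
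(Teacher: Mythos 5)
Your proposal is correct and follows essentially the same route as the paper: compactness and metrizability of the space of complete metric theories to extract a convergent subsequence, the metric Feferman--Vaught theorem for the elementary equivalence of the two reduced products, and the $\OCA$ plus $\MAsl$ rigidity corollary (isomorphism forces an asymptotic almost rearrangement) for non-isomorphism. The only difference is the concrete witness: the paper takes two log-separated infinite subsets of the values along the convergent subsequence, so that no almost permutation can achieve ratio tending to $1$, whereas you duplicate each term of a rapidly growing convergent subsequence and get the contradiction from the failure of injectivity of $n \mapsto \lceil n/2 \rceil$ on cofinite sets; both variants work equally well.
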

	\begin{proof}
		We follow a standard compactness argument (see \cite{TrivIso}*{proof of Proposition 6} for this argument in the discrete case which directly generalizes to the continuous setting).
		First note that if the sequences of metric theories $(\Th(\Sym(k_n)))_n$ and $(\Th(\Sym(l_n)))_n$ converge to the same limit in the logic topology, then by the metric Feferman-Vaught theorem (\cite{ghasemi2014reduced}) the metric groups $\Sym[(k_n)_n]$ and $\Sym[(l_n)_n]$ are elementarily equivalent.
		It follows that for every sequence $(m_n)_n$ of natural numbers with ${\lim_n m_n=\infty},$ there exists an infinite $S \subseteq \bbN$ such that for all infinite subsets $T_1,T_2 \subseteq S$:
		\[ \Sym[(m_n)_{n \in T_1}] \equiv \Sym[(m_n)_{n \in T_2}].\] Indeed, the space of theories in the (separable) language of metric groups is compact and metrizable, hence there exists an infinite $S\subseteq \bbN$  such that $(\Th(\Sym(m_n)))_{n\in S}$ converges and then by consequence we have for all infinite subsets $T_1,T_2 \subseteq S$ that
		\[ \Sym[(m_n)_{n \in T_1}] \equiv \Sym[(m_n)_{n \in T_2}].\]
		Any two sequences $(k_n)_n$ and $(l_n)_n$ that enumerate infinite subsets of $\{ m_n : n \in S \}$ and satisfy $\liminf_{n\neq n'} |\log(k_n) - \log(l_{n'})| >0$ now have the required properties.
	\end{proof}
	
	One can compare our main results to the following standard $\mathsf{CH}$-results, the proofs of which all follow well-known arguments.
	
	\begin{theorem}\label{Th.CHcase}
		Assume $\mathsf{CH}$.
		
		\begin{enumerate}
			\item\label{itm-1}     Every group $ \Sym[(k_n)_n]$ has $2^{2 ^ {\aleph_0}} = 2^{\aleph_1}$ many (isometric) automorphisms that are not trivial,
			\item\label{itm-2} the groups $ \Sym[(k_n)_n]$ and $ \Sym[(l_n)_n]$ are (isometrically) isomorphic if and only if they are elementarily equivalent (in the sense of continuous model theory),
			\item \label{itm-3} for every sequence $(k_n)_n$ of natural numbers with $\lim_n k_n = \infty,$ there exists an infinite $S \subseteq \bbN$ such that for all infinite subsets $T_1,T_2 \subseteq S$:
			\[ \Sym[(k_n)_{n \in T_1}] \cong \Sym[(k_n)_{n \in T_2}],\]
			\item\label{itm-4} there exist sequences $(k_n)_n$ and $(l_n)_n$ with $\lim_n k_n = \lim_n l_n=\infty,$ such that $\Sym[(k_n)_{n \in T_1}] \cong \Sym[(k_n)_{n \in T_2}],$ yet $(l_n)_n$ is no asymptotic almost rearrangement of $(k_n)_n$.
		\end{enumerate}
	\end{theorem}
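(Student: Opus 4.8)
The plan is to derive all four items from two pillars, the first of which I expect to be the real technical point. First, being a metric reduced product over the Fr\'echet filter of the finite (hence separable) metric groups $\Sym(k_n)$, the structure $\Sym[(k_n)_n]$ should be \emph{countably saturated}, meaning every complete type over a countable parameter set is realized; this is the metric analogue of the classical countable saturation of reduced products over $\Fin$ and lives inside the metric Feferman--Vaught framework of \cite{ghasemi2014reduced}. Second, since $k_n\to\infty$ the group has cardinality $2^{\aleph_0}$ and density character $\aleph_1$ under $\CH$. Together these let me run a back-and-forth of length $\aleph_1$ in which the accumulated parameter set stays countable at every stage below $\aleph_1$ (limit ordinals $<\aleph_1$ being countable), so that countable saturation repeatedly supplies the required realizations.

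For \eqref{itm-2}, one direction is immediate as an isomorphism preserves the metric theory. For the converse, assume $\Sym[(k_n)_n]\equiv\Sym[(l_n)_n]$, enumerate both groups (each of cardinality $\aleph_1$) as $\{a_\alpha:\alpha<\aleph_1\}$ and $\{b_\alpha:\alpha<\aleph_1\}$, and build an increasing chain of partial elementary maps with countable domains, starting from the empty map, which is elementary by hypothesis. At stage $\alpha$ I adjoin $a_\alpha$ to the domain by realizing, via countable saturation of the target, the image of the type of $a_\alpha$ over the current countable domain, and adjoin $b_\alpha$ to the range symmetrically. The union is an elementary bijection, hence an isometric isomorphism; by Corollary~\ref{cor.isometric} the isometric and abstract notions of isomorphism coincide in any case.

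For \eqref{itm-1}, I first bound the trivial automorphisms: each is given by an almost permutation of $\bbN$ (of which there are $2^{\aleph_0}=\aleph_1$) together with a conjugating element of $\Sym[(k_n)_n]$ (again $\aleph_1$ many), so there are at most $\aleph_1$ of them, while an isometric automorphism is determined by its values on a set of size $\aleph_1$, giving at most $\aleph_1^{\aleph_1}=2^{\aleph_1}$ in total. To manufacture $2^{\aleph_1}$ of them I run the back-and-forth of the previous paragraph as a branching construction: since $\Sym[(k_n)_n]$ is non-separable whereas the definable closure of a countable parameter set is separable, at each stage I may choose an element outside the definable closure of the current domain, whose type consequently has more than one realization, and mapping it to two of these opens a genuine binary branch. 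Carrying distinct branches through to length $\aleph_1$ (with back steps ensuring surjectivity) yields $2^{\aleph_1}$ distinct automorphisms, all isometric by Corollary~\ref{cor.isometric}; as only $\aleph_1<2^{\aleph_1}$ are trivial, $2^{\aleph_1}=2^{2^{\aleph_0}}$ are non-trivial.

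Items \eqref{itm-3} and \eqref{itm-4} I deduce from \eqref{itm-2}. Since the space of complete theories in the separable language of metric groups is compact and metrizable, the sequence $(\Th(\Sym(k_n)))_n$ converges along some infinite $S\subseteq\bbN$, so for all infinite $T_1,T_2\subseteq S$ the corresponding subsequences share a limit; by the metric Feferman--Vaught theorem \cite{ghasemi2014reduced} this gives $\Sym[(k_n)_{n\in T_1}]\equiv\Sym[(k_n)_{n\in T_2}]$, whence $\Sym[(k_n)_{n\in T_1}]\cong\Sym[(k_n)_{n\in T_2}]$ by \eqref{itm-2}, proving \eqref{itm-3}. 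For \eqref{itm-4} I apply this to a rapidly growing sequence and split $S$ into two infinite pieces whose value sequences $(k_n)_n$ and $(l_n)_n$ satisfy $\liminf_{n\neq n'}|\log(k_n)-\log(l_{n'})|>0$: they are isomorphic by \eqref{itm-3}, yet any almost permutation $g$ with $l_n/k_{g(n)}\to 1$ would force $|\log(l_n)-\log(k_{g(n)})|\to 0$, contradicting the log-separation, so $(l_n)_n$ is no asymptotic almost rearrangement of $(k_n)_n$. The principal obstacle throughout is the first pillar, namely verifying countable saturation of the metric reduced product in the precise form the back-and-forth needs; once that is secured, the remaining steps are the standard saturation and compactness arguments indicated above.
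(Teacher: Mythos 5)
Your proposal is correct and follows essentially the same route as the paper: the paper derives \eqref{itm-1} and \eqref{itm-2} from countable saturation of the reduced products (citing \cite{Fa:Combinatorial}*{Theorem 16.5.1}) together with density character $\aleph_1$ under $\CH$ (citing \cite{Fa:Combinatorial}*{Theorems 16.6.3 and 16.6.5}, whose proofs are exactly the branching back-and-forth arguments you sketch), and then obtains \eqref{itm-3} and \eqref{itm-4} from \eqref{itm-2} via the compactness/Feferman--Vaught and log-separation construction of Proposition~\ref{Prop.eleqnotiso}, just as you do. The only difference is bibliographic: the countable saturation you flag as the main technical pillar is quoted from Farah's book rather than proved or attributed to \cite{ghasemi2014reduced}.
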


	\begin{proof}
		Both \eqref{itm-1} and \eqref{itm-2} follow straight from the fact that the reduced product groups $\Sym[(k_n)_n]$ are all countably saturated (\cite{Fa:Combinatorial}*{Theorem 16.5.1}) and under $\mathsf{CH}$ have density character $\aleph_1$  (see resp.\ \cite{Fa:Combinatorial}*{Theorem 16.6.3} and \cite{Fa:Combinatorial}*{Theorem 16.6.5}). Using \eqref{itm-2}, the items \eqref{itm-3} and \eqref{itm-4} now easily follow from the construction in the proof of Proposition~\ref{Prop.eleqnotiso}.
	\end{proof}
	
	\section{Open problems and remarks}
	
	\begin{question}
		Is there a nice characterisation, in $\mathsf{ZFC}$, of those autohomeomorphisms $\theta$ of $\partial \beta \bbN$ which are induced by some automorphism of $\Sym[(k_n)_n]$?
	\end{question}
	
	\begin{question}
		Can one classify all homomorphisms $\varphi: \Sym[(k_n)_n] \to \Sym[(l_n)_n]$ under suitable set-theoretic assumptions?
	\end{question}
	
	\begin{question}
		For which sequences $(k_n)_n$, $(l_n)_n$ are  $\Sym[(k_n)_n]$ and $\Sym[(l_n)_n]$ elementarily equivalent?    
	\end{question}
	
	\begin{question}
		Is it consistent with $\mathsf{ZFC}$ that there exists an ultrafilter $\cU$ on $\bbN$ such that all automorphisms of the metric ultraproduct $\prod_\cU \Sym(n)$ are inner?  
	\end{question}
	
	\begin{remark}
		It seems natural to expect that similar results hold for the groups ${\rm U}(n)$ with the normalized trace-norm and with the operator norm in place of the symmetric group. Note that the corresponding reduced product in the norm case is the unitary group of the well-studied Corona algebra, see \cites{Fa:Combinatorial, farah2024coronarigidity}. The required Ulam stability results were proved for the normalized trace-norm by Gowers--Hatami \cite{MR3733361} and for the operator norm by Grove--Karcher--Ruh \cite{MR367862}, see also \cite{MR3867328}. This is subject of ongoing work.
	\end{remark}
	
	\section*{Acknowledgments}
	
	Ben De Bondt is funded by the Deutsche Forschungsgemeinschaft (DFG, German Research Foundation) under Germany's Excellence Strategy EXC 2044\,-390685587, Mathematics Münster: Dynamics--Geometry--Structure. 
	
	Andreas Thom acknowledges funding by the Deutsche Forschungsgemeinschaft (SPP 2026 ``Geometry at infinity'').
	
	We both thank Ilijas Farah for inspiring discussions about these topics and many useful remarks. 
	
	\begin{bibdiv}
		\begin{biblist}
			
			\bib{alekseevthom}{misc}{
				author={Alekseev, Vadim},
				author={Thom, Andreas},
				title={On non-isomorphic universal sofic groups},
				year={2024},
				note={arXiv:2406.06741},
			}
			
			\bib{MR4634678}{article}{
				author={Becker, Oren},
				author={Chapman, Michael},
				title={Stability of approximate group actions: uniform and probabilistic},
				journal={J. Eur. Math. Soc. (JEMS)},
				volume={25},
				date={2023},
				number={9},
				pages={3599--3632},
			}
			\bib{benyaacov}{article}{
				author={Ben Yaacov, Ita\"{\i}},
				author={Berenstein, Alexander},
				author={Henson, C. Ward},
				author={Usvyatsov, Alexander},
				title={Model theory for metric structures},
				conference={
					title={Model theory with applications to algebra and analysis. Vol. 2},
				},
				book={
					series={London Math. Soc. Lecture Note Ser.},
					volume={350},
					publisher={Cambridge Univ. Press, Cambridge},
				},
				date={2008},
				pages={315--427},
				label={BY{\etalchar{+}}08}
			}

			\bib{CIGNOLI200437}{article}{
				author = {Roberto Cignoli and Eduardo J. Dubuc and Daniele Mundici},
				title = {Extending Stone duality to multisets and locally finite MV-algebras},
				journal = {Journal of Pure and Applied Algebra},
				volume = {189},
				number = {1},
				pages = {37-59},
				date = {2004},
			}
			
			\bib{TrivIso}{misc}{
				author = {De Bondt, Ben},
				author={Farah, Ilijas},
				author={Vignati, Alessandro},
				note = {arXiv:2307.06731},
				date={2024},
				title = {Trivial Isomorphisms between Reduced Products},
			}
			
			\bib{metricliftingtheorem}{misc}{
				author={De Bondt, Ben},
				author={Vignati, Alessandro},
				title={A metric lifting theorem},
				year={2024},
				note={arXiv:2411.11127}
			}  
			
			\bib{MR3867328}{article}{
				author={De Chiffre, Marcus},
				author={Ozawa, Narutaka},
				author={Thom, Andreas},
				title={Operator algebraic approach to inverse and stability theorems for
					amenable groups},
				journal={Mathematika},
				volume={65},
				date={2019},
				number={1},
				pages={98--118},
			}
			
			\bib{MR2178069}{article}{
				author={Elek, G\'{a}bor},
				author={Szab\'{o}, Endre},
				title={Hyperlinearity, essentially free actions and $L^2$-invariants. The
					sofic property},
				journal={Math. Ann.},
				volume={332},
				date={2005},
				number={2},
				pages={421--441},
			}
			
			\bib{Fa:Combinatorial}{book}{
				author = {Ilijas Farah},
				publisher = {Springer},
				series = {Springer Monographs in Mathematics},
				title = {Combinatorial set theory of {$\mathrm{C^*}$}-algebras},
				date = {2019}
			}
			
			\bib{farah2024coronarigidity}{misc}{
				title={Corona Rigidity}, 
				author={Ilijas Farah},
				author={Saeed Ghasemi}, 
				author={Andrea Vaccaro},
				author={Alessandro Vignati},
				year={2024},
				note={arXiv:2201.11618}
			}
			
			\bib{MR2802082}{article}{
				author={Farah, Ilijas},
				author={Shelah, Saharon},
				title={A dichotomy for the number of ultrapowers},
				journal={J. Math. Log.},
				volume={10},
				date={2010},
				number={1-2},
				pages={45--81},
			}

			\bib{ghasemi2014reduced}{article}{
				author = {Ghasemi, Saeed},
				journal = {J. Symbolic Logic},
				number = {3},
				pages = {856--875},
				title = {Reduced products of metric structures: a metric {{F}eferman--{V}aught} theorem},
				volume = {81},
				date = {2016}
			}	
			\bib{MR3733361}{article}{
				author={Gowers, Timothy},
				author={Hatami, Omid},
				title={Inverse and stability theorems for approximate representations of
					finite groups},
				language={Russian, with Russian summary},
				journal={Mat. Sb.},
				volume={208},
				date={2017},
				number={12},
				pages={70--106},
				issn={0368-8666},
				translation={
					journal={Sb. Math.},
					volume={208},
					date={2017},
					number={12},
					pages={1784--1817},
					issn={1064-5616},
				},
			}
			
			\bib{MR1694588}{article}{
				author={Gromov, Mikhail},
				title={Endomorphisms of symbolic algebraic varieties},
				journal={J. Eur. Math. Soc. (JEMS)},
				volume={1},
				date={1999},
				number={2},
				pages={109--197},
			}

			\bib{MR367862}{article}{
				author={Grove, Karsten},
				author={Karcher, Hermann},
				author={Ruh, Ernst A.},
				title={Group actions and curvature},
				journal={Bull. Amer. Math. Soc.},
				volume={81},
				date={1975},
				pages={89--92},
			}
			\bib{MR26240}{article}{
				author={Kaplansky, Irving},
				title={Lattices of continuous functions. II},
				journal={Amer. J. Math.},
				volume={70},
				date={1948},
				pages={626--634},
			}
			\bib{kunen2011set}{book}{
				title={Set Theory},
				author={Kunen, Kenneth},
				series={Studies in logic},
				date={2011},
				publisher={College Publications}
			}
			
			\bib{MR2460675}{article}{
				author={Pestov, Vladimir G.},
				title={Hyperlinear and sofic groups: a brief guide},
				journal={Bull. Symbolic Logic},
				volume={14},
				date={2008},
				number={4},
				pages={449--480},
			}
			
			\bib{MR3966829}{article}{
				author={Thom, Andreas},
				title={Finitary approximations of groups and their applications},
				conference={
					title={Proceedings of the International Congress of
						Mathematicians---Rio de Janeiro 2018. Vol. III. Invited lectures},
				},
				book={
					publisher={World Sci. Publ., Hackensack, NJ},
				},
				date={2018},
				pages={1779--1799},
			}
			
			\bib{MR2607888}{article}{
				author={Thomas, Simon},
				title={On the number of universal sofic groups},
				journal={Proc. Amer. Math. Soc.},
				volume={138},
				date={2010},
				number={7},
				pages={2585--2590},
			}
		\end{biblist}
	\end{bibdiv}

\end{document}